\newcommand{\M}{\mathcal{M}}
\newcommand{\B}{\mathcal{B}}
\newcommand{\E}{\mathbb{E}}
\newcommand{\Prob}{\mathbb{P}}
\newcommand{\hn}{h_{\mathsf{NAF}}}
\newcommand{\hg}{h_{\mathsf{GRAY}}}
\newcommand{\ro}{r_{\mathsf{OPT}}}
\DeclareMathOperator{\rank}{rank}
\newcommand{\bff}{\boldsymbol{f}}
\newcommand{\bfu}{\boldsymbol{u}}
\newcommand{\bfv}{\boldsymbol{v}}
\newcommand{\bfx}{\boldsymbol{x}}
\newcommand{\bfy}{\boldsymbol{y}}
\newcommand{\bfdelta}{\boldsymbol{\delta}}
\newcommand{\bfb}{\boldsymbol{b}}
\newcommand{\eps}{\varepsilon}
\newif\ifdetails
\newcommand{\DETAIL}[1]%
{\ifdetails\par\fbox{\begin{minipage}{0.9\linewidth}\textit{Detail:}
      #1\end{minipage}}\par\fi}
\newcommand{\TODO}[1]%
{\ifdetails\par\fbox{\begin{minipage}{0.9\linewidth}\textbf{TODO:}
      #1\end{minipage}}\par\fi}
\newtheorem{lemma}{Lemma}
\newtheorem{prop}[lemma]{Proposition}
\newtheorem{theorem}[lemma]{Theorem}
\newtheorem{cor}[lemma]{Corollary}
\theoremstyle{remark}
\newtheorem{example}{Example}
\newtheorem*{remark}{Remark}
\newtheorem*{defi}{Definition}
\title{On $q$-Quasiadditive and $q$-Quasimultiplicative Functions}
\author{Sara Kropf}
\address{Institut f\"ur Mathematik \\
Alpen-Adria-Universit\"at Klagenfurt \\
Austria \\
and Institute of Statistical Science \\
Academia Sinica, Taipei \\
Taiwan
}
\email{sara.kropf@aau.at \textnormal{and} sarakropf@stat.sinica.edu.tw}
\author{Stephan Wagner}
\address{Department of Mathematical Sciences \\
Stellenbosch University \\
South Africa}
\email{swagner@sun.ac.za}
\keywords{$q$-additive function, $q$-quasiadditive function, $q$-regular function, central limit theorem}
\thanks{The first author is supported by the Austrian Science Fund (FWF): P~24644-N26. The second author is supported by the National Research Foundation of South Africa under grant number 96236. The authors were also supported by the Karl Popper Kolleg ``Modeling--Simulation--Optimization'' funded by the Alpen-Adria-Universit\"at
Klagenfurt and by the Carinthian Economic Promotion Fund (KWF). Part
of this paper was written while the second author was a Karl Popper
Fellow at the Mathematics Institute in Klagenfurt. He would like to
thank the institute for the hospitality received.\\
An extended abstract was presented at the 27th International Meeting on Probabilistic, Combinatorial, and Asymptotic Methods for the Analysis of Algorithms, Kraków, 4--8 July, 2016, see \cite{Kropf-Wagner:2016:quasiad-funct}.}
\begin{document}
\maketitle

\begin{abstract}
In this paper, we introduce the notion of \begin{math}q\end{math}-quasiadditivity of
arithmetic functions, as well as the related concept of
\begin{math}q\end{math}-quasimultiplicativity, which generalise strong \begin{math}q\end{math}-additivity
and -multiplicativity, respectively. We show that there are many
natural examples for these concepts, which are characterised by
functional equations of the form \begin{math}f(q^{k+r}a + b) = f(a) + f(b)\end{math} or
\begin{math}f(q^{k+r}a + b) = f(a) f(b)\end{math} for all \begin{math}b < q^k\end{math} and a fixed parameter \begin{math}r\end{math}.
In addition to some elementary properties of \begin{math}q\end{math}-quasiadditive and \begin{math}q\end{math}-quasimultiplicative functions, we prove characterisations of \begin{math}q\end{math}-quasiadditivity and \begin{math}q\end{math}-quasimultiplicativity for the special class of \begin{math}q\end{math}-regular functions. The final main result provides a general central limit theorem that includes both classical and new examples as corollaries.
\end{abstract}

\section{Introduction}

Arithmetic functions based on the digital expansion in some base \begin{math}q\end{math}
have a long history (see, e.g., \cite{Bellman-Shapiro:1948,Gelfond:1968:sur,Delange:1972:q-add-q-mult,Delange:1975:chiffres,Cateland:digital-seq,Bassily-Katai:1995:distr,Drmota:2000})
 The notion of a \begin{math}q\end{math}-\emph{additive} function is due to \cite{Gelfond:1968:sur}: an arithmetic function (defined on nonnegative integers) is called \begin{math}q\end{math}-additive if
\begin{equation*}f(q^k a + b) = f(q^k a) + f(b)\end{equation*}
whenever \begin{math}0 \leq b < q^k\end{math}. A stronger version of this concept is \emph{strong} (or \emph{complete}) \begin{math}q\end{math}-additivity: a function \begin{math}f\end{math} is said to be strongly \begin{math}q\end{math}-additive if we even have
\begin{equation*}f(q^k a + b) = f(a) + f(b)\end{equation*}
whenever \begin{math}0 \leq b < q^k\end{math}. The class of (strongly) \begin{math}q\end{math}-\emph{multiplicative} functions is defined in an analogous fashion.
Loosely speaking, (strong) \begin{math}q\end{math}-additivity of a function means that
it can be evaluated by breaking up the base-\begin{math}q\end{math} expansion. Typical
examples of strongly \begin{math}q\end{math}-additive functions are the \begin{math}q\end{math}-ary sum of
digits and the number of occurrences of a specified nonzero digit.

There are, however, many simple and natural functions based on the \begin{math}q\end{math}-ary expansion that are not \begin{math}q\end{math}-additive. A very basic example of this kind are \emph{block counts}: the number of occurrences of a certain block of digits in the \begin{math}q\end{math}-ary expansion. This and other examples provide the motivation for the present paper, in which we define and study a larger class of functions with comparable properties.

\begin{defi}
An arithmetic function (a function defined on the set of nonnegative integers) is called \begin{math}q\end{math}-\emph{quasiadditive} if there exists some nonnegative integer \begin{math}r\end{math} such that
\begin{equation}\label{eq:q-add}
f(q^{k+r}a + b) = f(a) + f(b)
\end{equation}
whenever \begin{math}0 \leq b < q^k\end{math}. Likewise, \begin{math}f\end{math} is said to be \begin{math}q\end{math}-\emph{quasimultiplicative} if it satisfies the identity
\begin{equation}\label{eq:q-mult}
f(q^{k+r}a + b) = f(a)f(b)
\end{equation}
for some fixed nonnegative integer \begin{math}r\end{math} whenever \begin{math}0 \leq b < q^k\end{math}.
\end{defi}

We remark that the special case \begin{math}r = 0\end{math} is exactly strong
\begin{math}q\end{math}-additivity, so strictly speaking the term ``strongly
\begin{math}q\end{math}-quasiadditive function'' might be more appropriate. However, since
we are not considering a weaker version (for which natural examples
seem to be much harder to find), we do not make a distinction. As a further caveat, we remark that the term ``quasiadditivity'' has also been used in \cite{allouche:1993} for a related, but slightly weaker condition.

In the
following section, we present a variety of examples of
\begin{math}q\end{math}-quasiadditive and \begin{math}q\end{math}-quasimultipli\-cative functions. 
In Section~\ref{sec:elem-properties}, we give some general properties of
such functions. Since most of our examples also belong to the related class of \begin{math}q\end{math}-regular
functions, we discuss the connection in Section~\ref{sec:q-regular}.
Finally, we prove a general central limit theorem for \begin{math}q\end{math}-quasiadditive and -multiplicative functions that contains both old
and new examples as special cases.

\section{Examples of $q$-quasiadditive and $q$-quasimultiplicative  functions}
\label{sec:exampl-q-quasiadd}
Let us now back up the abstract concept of \begin{math}q\end{math}-quasiadditivity by some concrete examples.

\subsection*{Block counts}

As mentioned in the introduction, the number of occurrences of a fixed
digit is a typical example of a \begin{math}q\end{math}-additive function. However, the
number of occurrences of a given block \begin{math}B = \epsilon_1\epsilon_2
\cdots \epsilon_{\ell}\end{math} of digits in the expansion of a nonnegative
integer \begin{math}n\end{math}, which we denote by \begin{math}c_B(n)\end{math}, does not represent a
\begin{math}q\end{math}-additive function. The reason is simple: the \begin{math}q\end{math}-ary expansion of \begin{math}q^ka + b\end{math} is obtained by joining the expansions of \begin{math}a\end{math} and \begin{math}b\end{math}, so occurrences of \begin{math}B\end{math} in \begin{math}a\end{math} and occurrences of \begin{math}B\end{math} in \begin{math}b\end{math} are counted by \begin{math}c_B(a) + c_{B}(b)\end{math}, but occurrences that involve digits of both \begin{math}a\end{math} and \begin{math}b\end{math} are not.

However, if \begin{math}B\end{math} is a block different from \begin{math}00\cdots0\end{math}, then \begin{math}c_B\end{math} is \begin{math}q\end{math}-quasiadditive: note that the representation of \begin{math}q^{k+\ell} a + b\end{math} is of the form
\begin{equation*}\underbrace{a_1 a_2 \cdots a_{\mu}}_{\text{expansion of } a} \underbrace{0 0 \cdots 0_{\vphantom{\mu}}}_{\ell \text{ zeros}} \underbrace{b_1 b_2 \cdots {b_{\nu}}_{\vphantom{\mu}}}_{\text{expansion of } b}\end{equation*}
whenever \begin{math}0 \leq b < q^k\end{math}, so occurrences of the block \begin{math}B\end{math} have to belong to either \begin{math}a\end{math} or \begin{math}b\end{math} only, implying that
\begin{math}c_B(q^{k+\ell} a + b) = c_B(a) + c_B(b)\end{math},
with one small caveat: if the block starts and/or ends with a sequence
of zeros, then the count needs to be adjusted by assuming the digital
expansion of a nonnegative integer to be padded with zeros on the left
and on the right. 

For example, let \begin{math}B\end{math} be the block \begin{math}0101\end{math} in base \begin{math}2\end{math}. The binary representations of \begin{math}469\end{math} and \begin{math}22\end{math} are \begin{math}111010101\end{math} and \begin{math}10110\end{math}, respectively, so we have \begin{math}c_B(469) = 2\end{math} and \begin{math}c_B(22) = 1\end{math} (note the occurrence of \begin{math}0101\end{math} at the beginning of \begin{math}10110\end{math} if we assume the expansion to be padded with zeros), as well as
\begin{equation*}c_B(240150) = c_B(2^9 \cdot 469 + 22) = c_B(469) + c_B(22) = 3.\end{equation*}
Indeed, the block \begin{math}B\end{math} occurs three times in the expansion of \begin{math}240150\end{math}, which is \begin{math}111010101000010110\end{math}.

\subsection*{The number of runs and the Gray code}

The number of ones in the Gray code of a nonnegative integer \begin{math}n\end{math},
which we denote by \begin{math}\hg(n)\end{math}, is also equal to the number of runs
(maximal sequences of consecutive identical digits) in the binary
representations of \begin{math}n\end{math} (counting the number of runs in the
representation of \begin{math}0\end{math} as \begin{math}0\end{math}); the sequence defined by \begin{math}\hg(n)\end{math} is
\href{http://oeis.org/A005811}{A005811} in Sloane's On-Line Encyclopedia of Integer Sequences
\cite{OEIS:2016}. An analysis of its expected value is performed in \cite{Flajolet-Ramshaw:1980:gray}. The function \begin{math}\hg\end{math} is \begin{math}2\end{math}-quasiadditive up to some minor
modification: set \begin{math}f(n) = \hg(n)\end{math} if \begin{math}n\end{math} is even and \begin{math}f(n) = \hg(n)
+ 1\end{math} if \begin{math}n\end{math} is odd. The new function \begin{math}f\end{math} can be interpreted as the
total number of occurrences of the two blocks \begin{math}01\end{math} and \begin{math}10\end{math} in the
binary expansion (considering binary expansions to be padded with zeros at both ends), so the argument of the previous example applies again and shows that \begin{math}f\end{math} is \begin{math}2\end{math}-quasiadditive.

\subsection*{The nonadjacent form and its Hamming weight}

The nonadjacent form (NAF) of a nonnegative integer is the unique
base-\begin{math}2\end{math} representation with digits \begin{math}0,1,-1\end{math} (\begin{math}-1\end{math} is usually
represented as \begin{math}\overline{1}\end{math} in this context) and the additional
requirement that there may not be two adjacent nonzero digits, see
\cite{Reitwiesner:1960}. For example, the NAF of \begin{math}27\end{math} is
\begin{math}100\overline{1}0\overline{1}\end{math}. It is well known that the NAF always
has minimum Hamming weight (i.e., the number of nonzero digits) among all
possible binary representations with this particular digit set,
although it may not be unique with this property (compare, e.g.,
\cite{Reitwiesner:1960} with \cite{Joye-Yen:2000:optim-left}).

The Hamming weight \begin{math}\hn\end{math} of the nonadjacent form has been analysed in
some detail \cite{Thuswaldner:1999,Heuberger-Kropf:2013:analy}, and it is also an example of a \begin{math}2\end{math}-quasiadditive function. It is not difficult to see that \begin{math}\hn\end{math} is characterised by the recursions
\begin{equation*}\hn(2n) = \hn(n), \qquad \hn(4n+1) = \hn(n) + 1, \qquad \hn(4n-1) = \hn(n) + 1\end{equation*}
together with the initial value \begin{math}\hn(0) = 0\end{math}. The identity
\begin{equation*}\hn(2^{k+2}a + b) = \hn(a) + \hn(b)\end{equation*}
can be proved by induction. In Section~\ref{sec:q-regular}, this example will be generalised and put into a larger context.

\subsection*{The number of optimal $\{0,1,-1\}$-representations}

As mentioned above, the NAF may not be the only representation with minimum Hamming weight among all possible binary representations with digits \begin{math}0,1,-1\end{math}. The number of optimal representations of a given nonnegative integer \begin{math}n\end{math} is therefore a quantity of interest in its own right. Its average over intervals of the form \begin{math}[0,N)\end{math} was studied by Grabner and Heuberger \cite{Grabner-Heuberger:2006:Number-Optimal}, who also proved that the number \begin{math}\ro(n)\end{math} of optimal representations of \begin{math}n\end{math} can be obtained in the following way:

\begin{lemma}[Grabner--Heuberger \cite{Grabner-Heuberger:2006:Number-Optimal}]\label{lemma:opt-representations-recursion}
Let sequences \begin{math}u_i\end{math} (\begin{math}i=1,2,\ldots,5\end{math}) be given recursively by
\begin{equation*}u_1(0) = u_2(0) = \cdots = u_5(0) = 1, \qquad u_1(1) = u_2(1) = 1,\ u_3(1) = u_4(1) = u_5(1) = 0,\end{equation*}
and
\begin{align*}
u_1(2n) = u_1(n), \qquad & u_1(2n+1) = u_2(n) + u_4(n+1), \\
u_2(2n) = u_1(n), \qquad & u_2(2n+1) = u_3(n), \\
u_3(2n) = u_2(n), \qquad & u_3(2n+1) = 0, \\
u_4(2n) = u_1(n), \qquad & u_4(2n+1) = u_5(n+1), \\
u_5(2n) = u_4(n), \qquad & u_5(2n+1) = 0.
\end{align*}
The number \begin{math}\ro(n)\end{math} of optimal representations of \begin{math}n\end{math} is equal to \begin{math}u_1(n)\end{math}.
\end{lemma}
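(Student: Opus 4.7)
My plan is to prove the lemma by giving each $u_i(n)$ a combinatorial meaning in terms of counts of optimal or near-optimal $\{0,1,-1\}$-representations of $n$ (or $n+1$) subject to specific constraints on the least significant digits, and then to verify the ten recursions and initial conditions by strong induction on $n$, decomposing each representation according to its last digit.

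The basic decomposition is as follows: every $\{0,1,-1\}$-representation of $n$ is obtained by choosing a least significant digit $d_0\in\{0,1,-1\}$ of the right parity and then an arbitrary $\{0,1,-1\}$-representation of $(n-d_0)/2$. Correspondingly the minimum Hamming weight satisfies $\hn(2m)=\hn(m)$ and $\hn(2m+1)=1+\min\{\hn(m),\hn(m+1)\}$, since for odd $n=2m+1$ the least significant digit is either $+1$ (leaving quotient $m$) or $-1$ (leaving quotient $m+1$). Hence counting optimal representations of an odd $n$ forces us to know which of the two children realises the minimum, and when they tie both contribute; this is precisely why a single recursion on $\ro$ does not close up and why five sequences are required. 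I would therefore define $u_2(n),\ldots,u_5(n)$ as restricted optimal counts of $n$ or $n+1$ that track, on the one hand, the local value of $\hn(n+1)-\hn(n)$ (which takes only finitely many values) and, on the other hand, whether a $-1$ is propagating a carry from below.

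With the right interpretations, each of the ten recursions becomes a straightforward identity: for example $u_1(2n+1)=u_2(n)+u_4(n+1)$ expresses that an odd argument is obtained by prepending $+1$ or $-1$ to an optimal representation of $n$ or $n+1$ respectively, weighted by whether the child weight actually attains the optimum. The main obstacle, and the part that requires the most care, is pinning down the combinatorial content of $u_2,\ldots,u_5$ so that all ten recursions and the initial values hold simultaneously: the appearance of $n+1$ rather than $n$ on the right-hand sides of the recursions for $u_1$, $u_4$ and $u_5$ has to reflect exactly the carries introduced by a $-1$ digit, and the case analysis of whether $\hn(m)<\hn(m+1)$, $\hn(m)=\hn(m+1)$, or $\hn(m)>\hn(m+1)$ must be consistent across all five sequences. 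Once these interpretations are in place, the initial conditions $u_i(0)=1$, $u_1(1)=u_2(1)=1$, $u_3(1)=u_4(1)=u_5(1)=0$ are immediate checks, and the induction step reduces to the routine verification that prepending a digit to an optimal child representation lands in the correct $u_i$-class at $n$.
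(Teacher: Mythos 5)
First, a point of reference: the paper does not prove this lemma at all — it is quoted as an external result of Grabner and Heuberger \cite{Grabner-Heuberger:2006:Number-Optimal}, so there is no in-paper argument to compare against. Your high-level plan (interpret each $u_i$ as a count of optimal representations of $n$ or $n+1$ subject to constraints encoding which of $\hn(n)$, $\hn(n+1)$ attains a minimum and whether a carry is propagating, then verify the ten recursions by induction on the least significant digit) is the right kind of argument, and your observation that the minimal weight satisfies $\hn(2m+1)=1+\min\{\hn(m),\hn(m+1)\}$ — so that counting optimal representations of an odd number forces one to know which child realises the minimum — correctly identifies why a single recursion does not close up and a system of five sequences is needed.

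However, as written the proposal is a plan rather than a proof, and the gap is exactly where you locate it yourself: you never define $u_2,\ldots,u_5$. All ten recursions and all the initial conditions depend on those definitions, and producing interpretations that satisfy the whole system simultaneously is the entire substance of the result, not a routine afterthought. For instance, the recursion $u_1(2n+1)=u_2(n)+u_4(n+1)$ essentially forces $u_2(n)$ to be $\ro(n)$ when $\hn(n)\leq\hn(n+1)$ and $0$ otherwise (and symmetrically for $u_4$ with the roles of $n$ and $n+1$ exchanged), but one must then check that these conditional counts themselves close up under the remaining recursions — e.g.\ that $u_2(2n+1)=u_3(n)$ and $u_3(2n+1)=0$ hold — which requires a separate analysis of how the difference $\hn(n+1)-\hn(n)$ evolves under $n\mapsto 2n$ and $n\mapsto 2n+1$, carried out consistently across all five sequences. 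That case analysis is precisely the part your proposal defers, so the argument cannot be checked or completed in its current form. To finish, you would need to write down the five interpretations explicitly and carry out the induction; alternatively, since the lemma is an attributed external result, it is legitimate to simply cite \cite{Grabner-Heuberger:2006:Number-Optimal}, as the paper does.
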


A straightforward calculation shows that
\begin{equation}\label{eq:8n_a}
\begin{aligned}
&u_1(8n) = u_2(8n) = \cdots = u_5(8n) = u_1(8n+1) = u_2(8n+1) = u_1(n),\\
&u_3(8n+1) = u_4(8n+1) = u_5(8n+1) = 0.
\end{aligned}
\end{equation}
This gives us the following result:

\begin{lemma}\label{lem:optrep}
The number of optimal \begin{math}\{0,1,-1\}\end{math}-representations of a nonnegative integer is a \begin{math}2\end{math}-quasimulti\-plicative function. Specifically, for any three nonnegative integers \begin{math}a,b,k\end{math} with \begin{math}b < 2^k\end{math}, we have
\begin{equation*}\ro(2^{k+3}a + b) = \ro(a)\ro(b).\end{equation*}
\end{lemma}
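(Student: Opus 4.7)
My approach is to prove a strengthened identity that simultaneously tracks all five auxiliary functions $u_1, \ldots, u_5$ of Lemma~\ref{lemma:opt-representations-recursion}. Specifically, I would show by induction on $k$ that
\begin{equation*}
u_i(2^{k+3}a + b) = u_1(a)\, u_i(b) \qquad (i \in \{1,\ldots,5\},\ a \geq 0,\ 0 \leq b \leq 2^k).
\end{equation*}
Taking $i = 1$ and restricting $b$ to the range $b < 2^k$ then yields the lemma, since $\ro(n) = u_1(n)$.

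The base case $k = 0$ (so $b \in \{0,1\}$) follows immediately from \eqref{eq:8n_a} together with the initial values $u_i(0) = 1$ and $u_i(1) \in \{0,1\}$. For the induction step, I write $b = 2b' + \beta$ with $\beta \in \{0,1\}$, set $m' = 2^{(k-1)+3}a + b'$, and observe that $m := 2^{k+3}a + b = 2m' + \beta$. When $\beta = 0$, each recursion of the form $u_i(2n) = u_{j(i)}(n)$ reduces both $u_i(m)$ and $u_i(b)$ to expressions in $u_{j(i)}(m')$ and $u_{j(i)}(b')$, and the induction hypothesis closes the case. When $\beta = 1$, the recursions for $u_i(2n+1)$ produce linear combinations of $u_j(m')$ and $u_j(m'+1) = u_j(2^{(k-1)+3}a + (b'+1))$; I would apply the induction hypothesis to both arguments and match the resulting expressions, term by term, against the analogous decomposition of $u_i(b) = u_i(2b'+1)$ under the very same recursions.

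The delicate point, and the main obstacle I anticipate, is the odd case when $b = 2^k - 1$: here $b' + 1 = 2^{k-1}$, so a naive induction hypothesis stated only for $b < 2^k$ would fail to cover the argument $m'+1$. Allowing $b \leq 2^k$ (rather than $b < 2^k$) in the induction hypothesis precisely removes this obstruction, since for odd $b \leq 2^k$ one necessarily has $b \leq 2^k - 1$ and hence $b' + 1 \leq 2^{k-1}$. Once this strengthening is in place, verification of the five recursion cases is routine bookkeeping that I would not grind through here.
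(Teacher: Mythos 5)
Your proposal is correct and takes essentially the same approach as the paper: the paper likewise strengthens the claim to all five components by proving $\mathbf{u}(2^{k+3}a+b)=\ro(a)\,\mathbf{u}(b)$, and it resolves the same shift-by-one obstruction you identify by carrying along the companion identity $\mathbf{u}(2^{k+3}a+b+1)=\ro(a)\,\mathbf{u}(b+1)$ for $b<2^k$, which covers exactly the same set of arguments as your extended range $b\leq 2^k$. The base case via \eqref{eq:8n_a} and the induction on $k$ with the parity split on $b$ are identical.
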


\begin{proof} We will prove a somewhat stronger statement by induction on \begin{math}t\end{math}: write
\begin{equation*}\mathbf{u}(n) = (u_1(n),u_2(n),u_3(n),u_4(n),u_5(n))^t.\end{equation*}
We show that
\begin{equation*}\mathbf{u}(2^{k+3}a + b) = \ro(a) \mathbf{u}(b)\end{equation*}
and
\begin{equation*}\mathbf{u}(2^{k+3}a + b+1) = \ro(a) \mathbf{u}(b+1)\end{equation*}
for all \begin{math}a,b,k\end{math} satisfying the conditions of the lemma, from which the desired result follows by considering the first entry of the vector \begin{math}\mathbf{u}(2^{k+3}a+b)\end{math}. Note first that both identities are clearly true for \begin{math}k=0\end{math} in view of~\eqref{eq:8n_a}. For the induction step, we distinguish two cases: if \begin{math}b\end{math} is even, we have
\begin{align*}
\mathbf{u}(2^{k+3}a + b) &=
\begin{pmatrix}
1 & 0 & 0 & 0 & 0 \\ 
1 & 0 & 0 & 0 & 0 \\
0 & 1 & 0 & 0 & 0 \\ 
1 & 0 & 0 & 0 & 0 \\ 
0 & 0 & 0 & 1 & 0
\end{pmatrix} \cdot \mathbf{u}(2^{k+2}a + b/2) \\
&= \begin{pmatrix}
1 & 0 & 0 & 0 & 0 \\ 
1 & 0 & 0 & 0 & 0 \\
0 & 1 & 0 & 0 & 0 \\ 
1 & 0 & 0 & 0 & 0 \\ 
0 & 0 & 0 & 1 & 0
\end{pmatrix} \cdot \ro(a) \mathbf{u}(b/2) \\
&= \ro(a) \mathbf{u}(b)
\end{align*}
by the induction hypothesis, as well as
\begin{align*}
\mathbf{u}(2^{k+3}a + b+1) &=
\begin{pmatrix}
0 & 1 & 0 & 0 & 0 \\ 
0 & 0 & 1 & 0 & 0 \\
0 & 0 & 0 & 0 & 0 \\ 
0 & 0 & 0 & 0 & 0 \\ 
0 & 0 & 0 & 0 & 0
\end{pmatrix} \cdot \mathbf{u}(2^{k+2}a + b/2) + \begin{pmatrix}
0 & 0 & 0 & 1 & 0 \\ 
0 & 0 & 0 & 0 & 0 \\
0 & 0 & 0 & 0 & 0 \\ 
0 & 0 & 0 & 0 & 1 \\ 
0 & 0 & 0 & 0 & 0
\end{pmatrix} \cdot \mathbf{u}(2^{k+2}a + b/2+1) \\
&= \begin{pmatrix}
0 & 1 & 0 & 0 & 0 \\ 
0 & 0 & 1 & 0 & 0 \\
0 & 0 & 0 & 0 & 0 \\ 
0 & 0 & 0 & 0 & 0 \\ 
0 & 0 & 0 & 0 & 0
\end{pmatrix} \cdot \ro(a)\mathbf{u}(b/2) + \begin{pmatrix}
0 & 0 & 0 & 1 & 0 \\ 
0 & 0 & 0 & 0 & 0 \\
0 & 0 & 0 & 0 & 0 \\ 
0 & 0 & 0 & 0 & 1 \\ 
0 & 0 & 0 & 0 & 0
\end{pmatrix} \cdot \ro(a)\mathbf{u}(b/2+1) \\
&= \ro(a)\mathbf{u}(b+1).
\end{align*}
The case that \begin{math}b\end{math} is odd is treated in an analogous fashion. 
\end{proof}

In Section~\ref{sec:q-regular}, we will show that this is also an instance of a more general phenomenon.

\subsection*{The run length transform and cellular automata}

The \emph{run length transform} of a sequence is defined in a recent paper of Sloane \cite{Sloane:number-on}: it is based on the binary representation, but could in principle also be generalised to other bases. Given a sequence \begin{math}s_1,s_2,\ldots\end{math}, its run length transform is obtained by the rule
\begin{equation*}t(n) = \prod_{i \in \mathcal{L}(n)} s_i,\end{equation*}
where \begin{math}\mathcal{L}(n)\end{math} is the multiset of run lengths of \begin{math}n\end{math} (lengths
of blocks of consecutive ones in the binary representation). For
example, the binary expansion of \begin{math}1910\end{math} is \begin{math}11101110110\end{math}, so the
multiset \begin{math}\mathcal{L}(n)\end{math} of run lengths would be \begin{math}\{3,3,2\}\end{math}, giving
\begin{math}t(1910) = s_2 s_3^2\end{math}.

A typical example is obtained for the sequence of Jacobsthal numbers given by the formula \begin{math}s_n = \frac13 (2^{n+2} - (-1)^n)\end{math}. The associated run length transform \begin{math}t_n\end{math} (sequence \href{http://oeis.org/A071053}{A071053} in the OEIS \cite{OEIS:2016}) counts the number of odd coefficients in the expansion of \begin{math}(1+x+x^2)^n\end{math}, and it can also be interpreted as the number of active cells at the \begin{math}n\end{math}-th generation of a certain cellular automaton. Further examples stemming from cellular automata can be found in Sloane's paper \cite{Sloane:number-on}.

The argument that proved \begin{math}q\end{math}-quasiadditivity of block counts also applies here, and indeed it is easy to see that the identity
\begin{equation*}t(2^{k+1}a + b) = t(a)t(b),\end{equation*}
where \begin{math}0 \leq b < 2^k\end{math}, holds for the run length transform of any sequence, meaning that any such transform is \begin{math}2\end{math}-quasimultiplicative. In fact, it is not difficult to show that every \begin{math}2\end{math}-quasimultiplicative function with parameter \begin{math}r=1\end{math} is the run length transform of some sequence.

\section{Elementary properties}
\label{sec:elem-properties}
Now that we have gathered some motivating examples for the concepts of \begin{math}q\end{math}-quasiadditivity and \begin{math}q\end{math}-quasi\-multiplicativity, let us present some simple results about functions with these properties. First of all, let us state an obvious relation between \begin{math}q\end{math}-quasiadditive and \begin{math}q\end{math}-quasimultiplicative functions:

\begin{prop}\label{prop:trivial}
If a function \begin{math}f\end{math} is \begin{math}q\end{math}-quasiadditive, then the function defined by \begin{math}g(n) = c^{f(n)}\end{math} for some positive constant \begin{math}c\end{math} is \begin{math}q\end{math}-quasimultiplicative. Conversely, if \begin{math}f\end{math} is a \begin{math}q\end{math}-quasimultiplicative function that only takes positive values, then the function defined by \begin{math}g(n) = \log_c f(n)\end{math} for some positive constant \begin{math}c \neq 1\end{math} is \begin{math}q\end{math}-quasiadditive.
\end{prop}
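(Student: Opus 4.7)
The proposition is an immediate consequence of the two defining functional equations \eqref{eq:q-add} and \eqref{eq:q-mult} together with the basic properties of the exponential and logarithm, so my plan is simply to verify the two directions by direct substitution, keeping the parameter $r$ intact in each case.

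For the first implication, I assume $f$ is $q$-quasiadditive with parameter $r$, take any $a,k$ and $0\le b<q^k$, and compute
\begin{equation*}
g(q^{k+r}a+b)=c^{f(q^{k+r}a+b)}=c^{f(a)+f(b)}=c^{f(a)}\cdot c^{f(b)}=g(a)\,g(b),
\end{equation*}
using the additive-to-multiplicative property of the exponential function (which holds for any positive base $c$). This shows that $g$ is $q$-quasimultiplicative with the same $r$.

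For the converse, I assume $f$ is $q$-quasimultiplicative with parameter $r$ and only takes positive values; the positivity assumption is exactly what is needed to apply $\log_c$ (with $c>0$, $c\ne 1$) to $f(n)$, $f(a)$ and $f(b)$. Under the same conditions on $a,b,k$,
\begin{equation*}
g(q^{k+r}a+b)=\log_c f(q^{k+r}a+b)=\log_c\bigl(f(a)f(b)\bigr)=\log_c f(a)+\log_c f(b)=g(a)+g(b),
\end{equation*}
so $g$ is $q$-quasiadditive with the same $r$.

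There is no genuine obstacle here; the only small point worth flagging is that in both directions the parameter $r$ transfers unchanged from $f$ to $g$, and that the positivity hypothesis in the converse is not cosmetic but is precisely what is required to make the logarithm well-defined on the range of $f$.
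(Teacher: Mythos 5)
Your proof is correct and is exactly the direct verification the paper has in mind; the paper in fact omits the proof entirely, calling the relation ``obvious,'' and your substitution argument (with the observation that the parameter $r$ and the positivity hypothesis play precisely the roles you describe) supplies the missing details faithfully.
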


The next proposition deals with the parameter \begin{math}r\end{math} in the definition of a \begin{math}q\end{math}-quasiadditive function:

\begin{prop}
If the arithmetic function \begin{math}f\end{math} satisfies
\begin{equation*}f(q^{k+r}a + b) = f(a) + f(b)\end{equation*}
for some fixed nonnegative integer \begin{math}r\end{math} whenever \begin{math}0 \leq b < q^k\end{math}, then it also satisfies
\begin{equation*}f(q^{k+s}a + b) = f(a) + f(b)\end{equation*}
for all nonnegative integers \begin{math}s \geq r\end{math} whenever \begin{math}0 \leq b < q^k\end{math}.
\end{prop}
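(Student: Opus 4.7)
The plan is to reduce the claim for the larger parameter $s$ directly to the hypothesis with parameter $r$ by absorbing the excess $s-r$ into the exponent $k$. The key observation is that the hypothesis is stated as a universal statement quantified over all nonnegative integers $k$ and all $b$ with $0 \leq b < q^k$, so a larger exponent is always permissible provided the bound on $b$ is preserved.

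Concretely, given $s \geq r$ and $0 \leq b < q^k$, I would set $k' = k + (s - r)$. Then $k' \geq k \geq 0$, and the bound $b < q^k \leq q^{k'}$ ensures that $b$ still satisfies the hypothesis's constraint relative to $k'$. Applying the given identity with $k'$ in place of $k$ yields
\begin{equation*}
f(q^{k'+r}a + b) = f(a) + f(b),
\end{equation*}
and since $k' + r = k + s$, this is exactly the desired equation.

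There is no real obstacle here; the proof is essentially a re-indexing. The only point worth emphasising is the direction of the inequality: enlarging $k$ to $k'$ merely loosens the constraint $b < q^{k'}$ compared to $b < q^k$, so any $b$ admissible for the new statement is automatically admissible for the hypothesis. Consequently the argument is a one-line substitution and no induction or case analysis is required.
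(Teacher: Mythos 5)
Your proof is correct and is essentially identical to the paper's: both substitute $k' = k + (s-r)$ into the hypothesis, noting that $b < q^k \leq q^{k'}$ keeps the constraint satisfied, so that $f(q^{k+s}a+b) = f(q^{k'+r}a+b) = f(a)+f(b)$. Nothing further is needed.
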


\begin{proof}
If \begin{math}a,b\end{math} are nonnegative integers with \begin{math}0 \leq b < q^k\end{math}, then clearly also \begin{math}0 \leq b < q^{k+s-r}\end{math} if \begin{math}s \geq r\end{math}, and thus
\begin{equation*}f(q^{k+s}a + b) = f(q^{(k+s-r)+r}a + b) = f(a) + f(b).\end{equation*}
\end{proof}

\begin{cor}\label{cor:lin_comb}
If two arithmetic functions \begin{math}f\end{math} and \begin{math}g\end{math} are \begin{math}q\end{math}-quasiadditive functions, then so is any linear combination \begin{math}\alpha f + \beta g\end{math} of the two.
\end{cor}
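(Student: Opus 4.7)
The plan is to reduce the problem to a direct verification of the defining identity once we have arranged for $f$ and $g$ to satisfy it with a common parameter. The only subtlety in the statement is that the two functions a priori come with potentially different parameters $r_f$ and $r_g$ from the definition of $q$-quasiadditivity; once this mismatch is resolved, the rest is a one-line computation.

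Concretely, I would first invoke the preceding proposition to replace $r_f$ and $r_g$ by the common value $r := \max(r_f, r_g)$, so that both $f$ and $g$ satisfy
\begin{equation*}
h(q^{k+r}a + b) = h(a) + h(b)
\end{equation*}
whenever $0 \le b < q^k$. This is the key preparatory step, and it is precisely where the preceding proposition is used. After this reduction, pick any scalars $\alpha, \beta$, set $h := \alpha f + \beta g$, and expand:
\begin{equation*}
h(q^{k+r}a+b) = \alpha f(q^{k+r}a+b) + \beta g(q^{k+r}a+b) = \alpha\bigl(f(a)+f(b)\bigr) + \beta\bigl(g(a)+g(b)\bigr) = h(a)+h(b).
\end{equation*}
Thus $h$ is $q$-quasiadditive with parameter $r$.

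There is essentially no obstacle here; the content is really just the observation that the quasiadditivity identity is linear in $f$, combined with the monotonicity of the parameter $r$ established in the previous proposition. For this reason I would expect the written proof to be only a couple of lines long.
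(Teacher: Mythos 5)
Your proof is correct and follows exactly the paper's argument: use the preceding proposition to bring $f$ and $g$ to a common parameter $r$, then observe that the defining identity is linear in the function. The paper states this in two sentences; you have merely written out the final computation explicitly.
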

\begin{proof}
In view of the previous proposition, we may assume the parameter \begin{math}r\end{math} in~\eqref{eq:q-add} to be the same for both functions. The statement follows immediately.
\end{proof}

Finally, we observe that \begin{math}q\end{math}-quasiadditive and \begin{math}q\end{math}-quasimultiplicative functions can be computed by breaking the \begin{math}q\end{math}-ary expansion into pieces.

\begin{lemma}\label{lem:simplefacts}
If \begin{math}f\end{math} is a \begin{math}q\end{math}-quasiadditive (\begin{math}q\end{math}-quasimultiplicative) function, then
\begin{itemize}
\item \begin{math}f(0) = 0\end{math} (\begin{math}f(0) = 1\end{math}, respectively, unless \begin{math}f\end{math} is identically \begin{math}0\end{math}),
\item \begin{math}f(qa) = f(a)\end{math} for all nonnegative integers \begin{math}a\end{math}.
\end{itemize}
\end{lemma}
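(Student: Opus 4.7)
The plan is to derive all four claims directly from the defining functional equations by plugging in carefully chosen values. Since each item is essentially a one-line computation, the work is really in the choice of substitutions, and the only subtlety is that the definition only a priori lets us pull out a factor of $q^{k+r}$ (with $k\ge 0$), so we cannot immediately read off what happens when we multiply $a$ by a single $q$.

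For the quasiadditive case, I would begin with $f(0) = 0$: take $a = b = 0$ and $k = 0$ in~\eqref{eq:q-add} (note $0 \le b < q^0 = 1$ forces $b = 0$), obtaining $f(0) = f(0) + f(0)$. Next, setting just $b = 0$ in~\eqref{eq:q-add} (with $k = 0$ allowed) gives $f(q^{k+r} a) = f(a) + f(0) = f(a)$ for every $k \ge 0$, and in particular both $f(q^r a) = f(a)$ and $f(q^{r+1} a) = f(a)$. To get the stronger statement $f(qa) = f(a)$, I apply the identity $f(q^r m) = f(m)$ with $m = qa$, which yields $f(q^{r+1}a) = f(qa)$; combining this with $f(q^{r+1}a) = f(a)$ gives the claim.

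For the quasimultiplicative case, the same choice $a = b = 0$, $k = 0$ in~\eqref{eq:q-mult} gives $f(0) = f(0)^2$, so $f(0) \in \{0,1\}$. If $f(0) = 0$, then for any $n$ I pick $k$ with $n < q^k$ and compute $f(n) = f(q^{k+r}\cdot 0 + n) = f(0)\,f(n) = 0$, so $f$ is identically zero. Otherwise $f(0) = 1$, and the same argument as in the additive case (with sums replaced by products) produces $f(q^{k+r} a) = f(a)$ for all $k \ge 0$, from which $f(qa) = f(a)$ follows by the same $m = qa$ trick.

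I expect no real obstacle; the only point worth flagging is the step $f(qa) = f(a)$, which is the mildly nontrivial one because the defining identity naturally produces invariance under multiplication by $q^s$ for $s\ge r$, not under multiplication by $q$ alone. The substitution $m = qa$ in the already-established identity $f(q^r m) = f(m)$ is the clean way to bridge this gap, and it works identically in both the additive and the multiplicative settings.
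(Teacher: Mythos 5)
Your proposal is correct and follows essentially the same route as the paper: the same substitutions $a=b=0$ and $b=0$ in the defining equation, and the same trick $f(a)=f(q^{r+1}a)=f(q^r\cdot qa)=f(qa)$ for the second item. The only difference is that you spell out the multiplicative dichotomy $f(0)\in\{0,1\}$ and the ``identically zero'' case explicitly, where the paper simply declares the multiplicative proof analogous.
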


\begin{proof}
Assume first that \begin{math}f\end{math} is \begin{math}q\end{math}-quasiadditive. Setting \begin{math}a = b = 0\end{math} in the defining functional equation~\eqref{eq:q-add}, we obtain
\begin{equation*}f(0) = f(0) + f(0),\end{equation*}
and the first statement follows. Setting \begin{math}b = 0\end{math} while \begin{math}a\end{math} is arbitrary, we now find that
\begin{equation*}f(q^{k+r}a) = f(a)\end{equation*}
for all \begin{math}k \geq 0\end{math}. In particular, this also means that
\begin{equation*}f(a) = f(q^{r+1}a) = f(q^r \cdot qa) = f(qa),\end{equation*}
which proves the second statement. For \begin{math}q\end{math}-quasimultiplicative
functions, the proof is analogous (and one can also use
Proposition~\ref{prop:trivial} for positive functions).
\end{proof}

\begin{prop}\label{prop:split}
Suppose that the function \begin{math}f\end{math} is \begin{math}q\end{math}-quasiadditive with parameter \begin{math}r\end{math}, i.e., \begin{math}f(q^{k+r}a + b) = f(a) + f(b)\end{math} whenever \begin{math}0 \leq b < q^k\end{math}. Going from left to right, split the $q$-ary expansion of \begin{math}n\end{math} into blocks by inserting breaks after each run of \begin{math}r\end{math} or more zeros. If these blocks are the $q$-ary representations of \begin{math}n_1,n_2,\ldots,n_{\ell}\end{math}, then we have
\begin{equation*}f(n) = f(n_1) + f(n_2) + \cdots + f(n_{\ell}).\end{equation*}
Moreover, if \begin{math}m_i\end{math} is
the greatest divisor
of \begin{math}n_i\end{math} which
is not divisible by \begin{math}q\end{math} for $i=1,\ldots,\ell$, then
\begin{equation*}f(n) = f(m_1) + f(m_2) + \cdots + f(m_{\ell}).\end{equation*}
Analogous statements hold for \begin{math}q\end{math}-quasimultiplicative functions, with sums replaced by products.
\end{prop}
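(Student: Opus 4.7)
The plan is to induct on the number $\ell$ of blocks, treating the $q$-quasiadditive case explicitly; the $q$-quasimultiplicative case is analogous (and for positive functions follows from Proposition~\ref{prop:trivial}). The case $\ell = 1$ is trivial, so everything will be in the inductive step.

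For the inductive step, let $L_i$ denote the length of the $q$-ary expansion of the block $n_i$ and set $L = L_2 + \cdots + L_\ell$. Concatenating the expansions of $n_2,\ldots,n_\ell$ produces an integer $\rho < q^L$, and one checks directly that $n = q^{L} n_1 + \rho$. The crucial observation is that, since breaks are inserted immediately after a run of at least $r$ zeros, the block $n_1$ must end with at least $r$ zeros in its $q$-ary expansion; thus $n_1 = q^s m_1$ with $s \geq r$ and $q \nmid m_1$. Consequently
\[
n = q^{L+s} m_1 + \rho, \qquad \rho < q^L.
\]
Since $q$-quasiadditivity with parameter $r$ implies $q$-quasiadditivity with any parameter $s \geq r$ (by the preceding proposition), we may apply~\eqref{eq:q-add} with $k = L$ and the enlarged shift parameter $s$, giving $f(n) = f(m_1) + f(\rho) = f(n_1) + f(\rho)$, where the second equality is Lemma~\ref{lem:simplefacts} iterated.

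To close the induction I need the block decomposition of $\rho$ (with respect to the same parameter $r$) to be exactly $(n_2,\ldots,n_\ell)$. This rests on the fact that the breaks were placed after \emph{maximal} runs of $\geq r$ zeros: the digit just after each break is necessarily nonzero, so each $n_i$ for $i \geq 2$ begins with a nonzero digit, the concatenation has length exactly $L$, and no additional long zero-run is created at the junctions between consecutive blocks. Once this is verified, the induction hypothesis applied to $\rho$ yields $f(\rho) = f(n_2) + \cdots + f(n_\ell)$, completing the proof of the first formula. The second, ``moreover'' formula follows immediately from $f(n_i) = f(m_i)$, which is another consequence of Lemma~\ref{lem:simplefacts}.

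The main (and only) obstacle is bookkeeping: carefully checking that maximality of the zero-runs guarantees that the block decomposition of $\rho$ matches $(n_2,\ldots,n_\ell)$, so that the induction hypothesis applies cleanly. There is no genuine analytic difficulty.
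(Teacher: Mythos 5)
Your proof is correct and follows essentially the same route as the paper, which simply states that the result is ``obtained by a straightforward induction on $\ell$ together with the fact that $f(q^h a) = f(a)$'' from Lemma~\ref{lem:simplefacts}. You have merely filled in the details the paper leaves implicit (peeling off the leftmost block via $n = q^{L+s}m_1 + \rho$ with $s \geq r$, and checking that the remaining blocks are preserved), so there is nothing further to add.
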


\begin{proof}
This is obtained by a straightforward induction on \begin{math}\ell\end{math} together with the fact that \begin{math}f(q^{h} a) = f(a)\end{math}, which follows from the previous lemma.
\end{proof}

\begin{example}
Recall that the Hamming weight of the NAF (which is the minimum Hamming weight of a \begin{math}\{0,1,-1\}\end{math}-representation) is \begin{math}2\end{math}-quasiadditive with parameter \begin{math}r=2\end{math}. To determine \begin{math}\hn(314\,159\,265)\end{math}, we split the binary representation, which is
\begin{math}10010101110011011000010100001,\end{math}
into blocks by inserting breaks after each run of at least two zeros:
\begin{equation*}100|101011100|110110000|1010000|1.\end{equation*}
The numbers \begin{math}n_1,n_2,\ldots,n_{\ell}\end{math} in the statement of the proposition are now \begin{math}4,348,432,80,1\end{math} respectively, and the numbers \begin{math}m_1,m_2,\ldots,m_{\ell}\end{math} are therefore \begin{math}1,87,27,5,1\end{math}. Now we use the values \begin{math}\hn(1) = 1\end{math}, \begin{math}\hn(5) = 2\end{math}, \begin{math}\hn(27) = 3\end{math} and \begin{math}\hn(87) = 4\end{math} to obtain
\begin{equation*}\hn(314\,159\,265) = 2\hn(1) + \hn(5) + \hn(27) + \hn(87) = 11.\end{equation*}
\end{example}

\begin{example}
In the same way, we consider the number of optimal representations \begin{math}\ro\end{math}, which is \begin{math}2\end{math}-quasimultiplicative with parameter \begin{math}r=3\end{math}. Consider for instance the binary representation of \begin{math}204\,280\,974\end{math}, namely 
\begin{math}1100001011010001010010001110\end{math}.
We split into blocks:
\begin{equation*}110000|101101000|101001000|1110.\end{equation*}
The four blocks correspond to the numbers \begin{math}48 = 16 \cdot 3\end{math}, \begin{math}360 = 8 \cdot 45\end{math}, \begin{math}328 = 8 \cdot 41\end{math} and \begin{math}14 = 2 \cdot 7\end{math}. Since \begin{math}\ro(3) = 2\end{math}, \begin{math}\ro(45) = 5\end{math}, \begin{math}\ro(41) = 1\end{math} and \begin{math}\ro(7) = 1\end{math}, we obtain
\begin{math}\ro(204\,280\,974) = 10\end{math}.
\end{example}

\section{$q$-Regular functions}\label{sec:q-regular}
In this section, we introduce \begin{math}q\end{math}-regular functions and examine the
connection to our concepts. See~\cite{Allouche-Shallit:2003:autom} for
more background on \begin{math}q\end{math}-regular functions and sequences.

A function \begin{math}f\end{math} is
\emph{\begin{math}q\end{math}-regular} if it can be expressed as \begin{math}f=\bfu^{t}\bff\end{math} for a vector \begin{math}\bfu\end{math}
and a vector-valued function \begin{math}\bff\end{math}, and there are matrices \begin{math}M_{i}\end{math}, \begin{math}0\leq i<q\end{math}, satisfying
\begin{equation}\label{eq:q-regular-recursive}
  \bff(qn+i)=M_{i}\bff(n)
\end{equation}
for \begin{math}0\leq i<q\end{math}, \begin{math}qn+i>0\end{math}. We set \begin{math}\bfv=\bff(0)\end{math}.

Equivalently, a function \begin{math}f\end{math} is \begin{math}q\end{math}-regular if and only if \begin{math}f\end{math} can be written as
\begin{equation}
  \label{eq:q-regular}
  f(n)=\bfu^{t} \prod_{i=0}^{L} M_{n_{i}}\bfv
\end{equation}
where \begin{math}n_{L}\cdots n_{0}\end{math} is the \begin{math}q\end{math}-ary expansion of \begin{math}n\end{math}.

The notion of \begin{math}q\end{math}-regular functions is a generalisation of
\begin{math}q\end{math}-additive and \begin{math}q\end{math}-multiplicative functions. However, we emphasise that \begin{math}q\end{math}-quasiadditive and \begin{math}q\end{math}-quasimultiplicative functions are not
necessarily \begin{math}q\end{math}-regular: a \begin{math}q\end{math}-regular sequence can always be bounded
by \begin{math}O(n^{c})\end{math} for a constant \begin{math}c\end{math}, see~\cite[Thm.\
16.3.1]{Allouche-Shallit:2003:autom}. In our setting however, the values of \begin{math}f(n)\end{math} can be chosen arbitrarily for those \begin{math}n\end{math} whose \begin{math}q\end{math}-ary expansion does not contain \begin{math}0^{r}\end{math}. Therefore a \begin{math}q\end{math}-quasiadditive or -multiplicative function can grow arbitrarily fast.

We call \begin{math}(\bfu, (M_{i})_{0\leq i<q}, \bfv)\end{math} a \emph{linear representation} of the
\begin{math}q\end{math}-regular function \begin{math}f\end{math}. Such a linear representation is called
\emph{zero-insensitive} if \begin{math}M_{0}\bfv=\bfv\end{math}, meaning that in
\eqref{eq:q-regular}, leading zeros in the \begin{math}q\end{math}-ary expansion of \begin{math}n\end{math} do
not change anything. We call a linear representation \emph{minimal} if the dimension
of the matrices \begin{math}M_{i}\end{math} is minimal among all linear representations of \begin{math}f\end{math}.

 Following \cite{Dumas:2014:asymp}, every \begin{math}q\end{math}-regular function has a
 zero-insensitive minimal linear representation.

\subsection{When is a $q$-regular function $q$-quasimultiplicative?}
We now give a characterisation of \begin{math}q\end{math}-regular functions that are \begin{math}q\end{math}-quasimultiplicative. 

\begin{theorem}\label{theorem:reg-mult}
  Let \begin{math}f\end{math} be a \begin{math}q\end{math}-regular sequence with zero-insensitive minimal linear representation~\eqref{eq:q-regular}. Then the following
  two assertions are equivalent:
  \begin{itemize}
  \item The sequence \begin{math}f\end{math} is \begin{math}q\end{math}-quasimultiplicative with parameter
    \begin{math}r\end{math}.
    \item    \begin{math}M_{0}^{r}=\bfv\bfu^{t}\end{math}.
  \end{itemize}
\end{theorem}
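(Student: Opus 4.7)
The plan is to translate both directions into a single matrix identity. Iterating the recursion \begin{math}\bff(qn+i)=M_{i}\bff(n)\end{math}, if \begin{math}a\end{math} has \begin{math}q\end{math}-ary expansion \begin{math}a_{L}\cdots a_{0}\end{math} and \begin{math}b\end{math} has \begin{math}q\end{math}-ary expansion \begin{math}b_{k-1}\cdots b_{0}\end{math} (padded with leading zeros to length \begin{math}k\end{math}), then the digits of \begin{math}q^{k+r}a+b\end{math} read from least to most significant as the digits of \begin{math}b\end{math}, followed by \begin{math}r\end{math} zeros, followed by the digits of \begin{math}a\end{math}. Hence
\begin{equation*}
f(q^{k+r}a+b) \;=\; \bfu^{t}\,M_{b_{0}}\cdots M_{b_{k-1}}\,M_{0}^{r}\,M_{a_{0}}\cdots M_{a_{L}}\,\bfv,
\end{equation*}
and likewise \begin{math}f(a)=\bfu^{t}M_{a_{0}}\cdots M_{a_{L}}\bfv\end{math} together with \begin{math}f(b)=\bfu^{t}M_{b_{0}}\cdots M_{b_{k-1}}\bfv\end{math}, where the latter is unaffected by zero-padding precisely because the representation is zero-insensitive (\begin{math}M_{0}\bfv=\bfv\end{math}).

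The implication ``\begin{math}M_{0}^{r}=\bfv\bfu^{t}\end{math} \begin{math}\Rightarrow\end{math} quasimultiplicativity'' is then a routine substitution: plugging \begin{math}M_{0}^{r}=\bfv\bfu^{t}\end{math} into the formula for \begin{math}f(q^{k+r}a+b)\end{math}, the long product splits at the middle, giving \begin{math}(\bfu^{t}M_{b_{0}}\cdots M_{b_{k-1}}\bfv)\cdot(\bfu^{t}M_{a_{0}}\cdots M_{a_{L}}\bfv)=f(b)f(a)\end{math}.

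For the converse, assume \begin{math}f\end{math} is \begin{math}q\end{math}-quasimultiplicative with parameter \begin{math}r\end{math}. Setting \begin{math}\bfx=M_{a_{0}}\cdots M_{a_{L}}\bfv\end{math} and \begin{math}\bfy^{t}=\bfu^{t}M_{b_{0}}\cdots M_{b_{k-1}}\end{math}, equation~\eqref{eq:q-mult} rewrites as
\begin{equation*}
\bfy^{t}M_{0}^{r}\bfx \;=\; (\bfy^{t}\bfv)(\bfu^{t}\bfx) \;=\; \bfy^{t}(\bfv\bfu^{t})\bfx
\end{equation*}
for every \begin{math}\bfx\end{math} and \begin{math}\bfy^{t}\end{math} of the stated form (as \begin{math}a\end{math} and \begin{math}b\end{math} range over all nonnegative integers, and \begin{math}k\end{math} over all sufficiently large integers). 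The crux of the argument, which I expect to be the main obstacle, is to invoke minimality of the linear representation in order to conclude that the vectors \begin{math}\bfx\end{math} span the state space and the row vectors \begin{math}\bfy^{t}\end{math} span its dual. This is the standard reachability--observability characterisation of minimal linear representations (cf.~\cite{Allouche-Shallit:2003:autom,Dumas:2014:asymp}); on the \begin{math}\bfx\end{math}-side, zero-insensitivity ensures that the set of reachable vectors coincides with \begin{math}\{\bff(a):a\geq 0\}\end{math}, while on the \begin{math}\bfy^{t}\end{math}-side one uses that arbitrary digit words (including those with leading zeros) are admissible. Once the two spanning properties are in hand, bilinearity upgrades the identity above to \begin{math}\bfy^{t}M_{0}^{r}\bfx=\bfy^{t}\bfv\bfu^{t}\bfx\end{math} for \emph{all} \begin{math}\bfx,\bfy\end{math}, and hence \begin{math}M_{0}^{r}=\bfv\bfu^{t}\end{math}, as desired.
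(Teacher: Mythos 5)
Your proposal is correct and follows essentially the same route as the paper: both reduce $q$-quasimultiplicativity to the bilinear identity $\bfy^{t}(M_{0}^{r}-\bfv\bfu^{t})\bfx=0$ over the reachable vectors $\bfx$ and observable covectors $\bfy^{t}$, and then use minimality to conclude that these two families span the state space and its dual, so the identity forces $M_{0}^{r}=\bfv\bfu^{t}$. The only difference is that you cite the standard reachability--observability characterisation of minimal linear representations for the spanning step, whereas the paper proves the two spanning statements directly by a short dimension-reduction argument (if either family failed to span, a change of coordinates would produce a lower-dimensional linear representation, contradicting minimality).
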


\begin{proof} Let \begin{math}d\end{math} be the dimension of the vectors. We first prove that the set of vectors
  \begin{equation*}\Big\{\bfu^{t}\prod_{i\in I}
  M_{n_{i}}\mid n_{i}\in \{0,\ldots,q-1\}, I \text{ finite}\Big\}\end{equation*}
is a generating
  system of the whole \begin{math}d\end{math}-dimensional vector space. This is done by contradiction: assume that there is a coordinate
  transformation such that the first \begin{math}d_{0}<d\end{math} unit vectors form a
  basis of the transformed space spanned by  \begin{math}\{\bfu^{t}\prod_{i\in I}
  M_{n_{i}}\mid n_{i}\in \{0,\ldots,q-1\}, I \text{ finite}\}\end{math}. This
  coordinate transform defines a different linear representation of \begin{math}f\end{math}
  with matrices \begin{math}\hat{M_{i}}\end{math} and vectors \begin{math}\hat{\bfu}\end{math} and
  \begin{math}\hat{\bfv}\end{math}. However, only the first \begin{math}d_{0}\end{math} coordinates of any
  vector \begin{math}\bfu^{t}\prod_{i\in I}M_{n_{i}}\end{math} are nonzero. Thus we can reduce
  the dimension of the matrices and vectors from \begin{math}d\end{math} to \begin{math}d_{0}\end{math} to obtain a new
  linear representation of \begin{math}f\end{math}. This contradicts the minimality of the
  original linear representation. 

 Analogously,
  \begin{math}\{\prod_{j\in J} M_{n_{j}}\bfv\mid n_{j}\in \{0,\ldots,q-1\}, J
  \text{ finite}\}\end{math}  is also a generating system for the whole vector space.

  The \begin{math}q\end{math}-quasimultiplicativity of \begin{math}f(n)\end{math} with parameter \begin{math}r\end{math} is equivalent to the identity
  \begin{equation*}
    \bfu^{t}\prod_{i\in I} M_{n_{i}}(M_{0}^{r}-\bfv\bfu^{t})\prod_{j\in J}M_{n_{j}}\bfv=0
  \end{equation*}
  for all finite tuples \begin{math}(n_{i})_{i\in I}\end{math} and \begin{math}(n_{j})_{j\in
    J}\end{math}. Since both \begin{math}\{\bfu^{t}\prod_{i\in I} M_{n_{i}} \}\end{math} and \begin{math}\{\prod_{j\in J}M_{n_{j}}\bfv\}\end{math} are generating systems of the entire vector space, this is equivalent to
  \begin{math}\boldsymbol{x}^{t}(M_{0}^{r}-\bfv\bfu^{t})\boldsymbol{y}=0\end{math} for all vectors \begin{math}\boldsymbol{x}\end{math} and \begin{math}\boldsymbol{y}\end{math}, which in turn is
  equivalent to \begin{math}M_{0}^{r}=\bfv\bfu^{t}\end{math}.
\end{proof}

\begin{example}[The number of optimal \begin{math}\{0,1,-1\}\end{math}-representations]
  The number of optimal \begin{math}\{0,1,-1\}\end{math}-repre\-sentations as described in
  Section~\ref{sec:exampl-q-quasiadd} is a \begin{math}2\end{math}-regular sequence by
  Lemma~\ref{lemma:opt-representations-recursion}. A
  minimal zero-insensitive linear representation for the vector \begin{math}(u_{1}(n),
  u_{2}(n), u_{3}(n), u_{1}(n+1), u_{4}(n+1),
  u_{5}(n+1))^{t}\end{math} is given by
  \begin{equation*}
    M_{0}=
    \begin{pmatrix}
      1&0&0&0&0&0\\
      1&0&0&0&0&0\\
      0&1&0&0&0&0\\
      0&1&0&0&1&0\\
      0&0&0&0&0&1\\
      0&0&0&0&0&0
    \end{pmatrix},\quad
    M_{1}=
    \begin{pmatrix}
      0&1&0&0&1&0\\
      0&0&1&0&0&0\\
      0&0&0&0&0&0\\
      0&0&0&1&0&0\\
      0&0&0&1&0&0\\
      0&0&0&0&1&0
    \end{pmatrix},
  \end{equation*}
\begin{math}\bfu^{t}=(1,0,0,0,0,0)\end{math} and \begin{math}\bfv=(1,1,1,1,0,0)^{t}\end{math}.

As \begin{math}M_{0}^{3}=\bfv\bfu^{t}\end{math}, this sequence is \begin{math}2\end{math}-quasimultiplicative with
parameter \begin{math}3\end{math}, which is the same result as in Lemma~\ref{lem:optrep}. 
\end{example}

\begin{remark}
  The condition on the minimality of the linear representation in
  Theorem~\ref{theorem:reg-mult} is necessary as illustrated by the
  following example:
  
  Consider the sequence \begin{math}f(n)=2^{s_{2}(n)}\end{math}, where \begin{math}s_{2}(n)\end{math} is the binary sum of digits
  function. This sequence is \begin{math}2\end{math}-regular and
  \begin{math}2\end{math}-(quasi-)multiplicative with parameter \begin{math}r=0\end{math}. A ($1$-dimensional) minimal
 linear representation is given by \begin{math}M_{0}=1\end{math}, \begin{math}M_{1}=2\end{math}, \begin{math}v=1\end{math} and \begin{math}u=1\end{math}. As stated
  in Theorem~\ref{theorem:reg-mult}, we have \begin{math}M_{0}^{0}=vu^{t}=1\end{math}.

  If we use the zero-insensitive non-minimal linear representation defined by \begin{math}M_{0}=\big(
  \begin{smallmatrix}
    1&13\\0&2
  \end{smallmatrix}\big)
\end{math}, \begin{math}M_{1}=\big(
\begin{smallmatrix}
  2&27\\0&5
\end{smallmatrix}
\big)\end{math}, \begin{math}\bfv=(1, 0)^{t}\end{math} and \begin{math}\bfu^{t}=(1, 0)\end{math} instead, we have \begin{math}\rank M_{0}^{r}=2\end{math}
for all \begin{math}r\geq 0\end{math}. Thus \begin{math}M_{0}^{r}\neq \bfv\bfu^{t}\end{math}.
\end{remark}

\subsection{When is a $q$-regular function $q$-quasiadditive?}

The characterisation of \begin{math}q\end{math}-regular functions that are also
\begin{math}q\end{math}-quasiadditive is somewhat more complicated. Again, we consider a
zero-insensitive (but not necessarily minimal) linear representation. We let \begin{math}U\end{math} be the smallest
vector space such that all vectors of the form \begin{math}\bfu^{t}\prod_{i\in I}
M_{n_{i}}\end{math} lie in the affine subspace \begin{math}\bfu^{t} + U^t\end{math} (\begin{math}U^t\end{math} is used
as a shorthand for \begin{math}\{\bfx^{t} \,:\, \bfx \in U\}\end{math}). Such a vector
space must exist, since \begin{math}\bfu^{t}\end{math} is a vector of this form
(corresponding to the empty product, where \begin{math}I = \emptyset\end{math}). Likewise,
let \begin{math}V\end{math} be the smallest vector space such that all vectors of the form
\begin{math}\prod_{j\in J}M_{n_{j}}\bfv\end{math} lie in the affine subspace \begin{math}\bfv +
V\end{math}.

\begin{theorem}\label{thm:q-reg-q-quasiadd}
  Let \begin{math}f\end{math} be a \begin{math}q\end{math}-regular sequence with zero-insensitive linear representation
  \eqref{eq:q-regular}. The sequence \begin{math}f\end{math} is \begin{math}q\end{math}-quasiadditive with parameter \begin{math}r\end{math} if and only if all of the following statements hold:
\begin{itemize}
\item \begin{math}\bfu^t \bfv = 0\end{math},
\item \begin{math}U^t\end{math} is orthogonal to \begin{math}(M_0^r - I)\bfv\end{math}, i.e., \begin{math}\bfx^t(M_0^r - I)\bfv = \bfx^tM_0^r\bfv - \bfx^t\bfv = 0\end{math} for all \begin{math}\bfx \in U\end{math},
\item \begin{math}V\end{math} is orthogonal to \begin{math}\bfu^t(M_0^r - I)\end{math}, i.e., \begin{math}\bfu^t(M_0^r - I)\bfy = \bfu^tM_0^r\bfy - \bfu^t\bfy = 0\end{math} for all \begin{math}\bfy \in V\end{math},
\item \begin{math}U^t M_0^r V = 0\end{math}, i.e., \begin{math}\bfx^t M_0^r \bfy  = 0\end{math} for all \begin{math}\bfx \in U\end{math} and \begin{math}\bfy \in V\end{math}.
\end{itemize}
\end{theorem}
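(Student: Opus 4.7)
My plan is to translate the $q$-quasiadditivity identity $f(q^{k+r}a+b) = f(a)+f(b)$ (for $0 \leq b < q^k$) into a polynomial identity in the matrices $M_i$ and vectors $\bfu, \bfv$, then extract the four listed conditions by separating constant, linear, and bilinear pieces.

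First, I would write out the $q$-ary expansion of $q^{k+r}a+b$ as the concatenation of the digits of $a$, a block of $r$ zeros, and the $k$-digit expansion $b_0 b_1 \cdots b_{k-1}$ of $b$ (padded with leading zeros if necessary). Setting
\[
\bfx^t := \bfu^t M_{b_0} M_{b_1} \cdots M_{b_{k-1}}, \qquad \bfy := M_{a_0} M_{a_1} \cdots M_{a_{L_a}} \bfv,
\]
the linear representation gives $f(q^{k+r}a+b) = \bfx^t M_0^r \bfy$, while zero-insensitivity ($M_0^j \bfv = \bfv$) absorbs any padding zeros multiplying $\bfv$ from the left and yields $f(b) = \bfx^t \bfv$ and $f(a) = \bfu^t \bfy$. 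Hence $q$-quasiadditivity becomes
\[
\bfx^t M_0^r \bfy - \bfu^t \bfy - \bfx^t \bfv = 0
\]
for all $\bfx^t$ in $\{\bfu^t \prod_{i \in I} M_{n_i}\}$ and $\bfy$ in $\{\prod_{j \in J} M_{n_j} \bfv\}$; these ranges vary independently, since $a$ and $b$ may be chosen independently (with $k$ large enough).

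Next I would decompose $\bfx = \bfu + \bfx_0$ and $\bfy = \bfv + \bfy_0$, where by the definitions of $U$ and $V$ the vectors $\bfx_0, \bfy_0$ sweep out spanning sets of those subspaces. Using $M_0^r \bfv = \bfv$ to simplify both $\bfu^t M_0^r \bfv = \bfu^t \bfv$ and $\bfx_0^t M_0^r \bfv = \bfx_0^t \bfv$ (the latter then cancels against the matching term from $-\bfx^t\bfv$), the identity collapses to
\[
-\bfu^t \bfv + \bfu^t(M_0^r - I)\bfy_0 + \bfx_0^t M_0^r \bfy_0 = 0.
\]
The theorem's conditions now fall out by degree: setting $\bfx_0 = \bfy_0 = 0$ gives $\bfu^t \bfv = 0$; fixing $\bfx_0 = 0$ and letting $\bfy_0$ sweep a spanning set of $V$ (then extending by linearity) gives the third condition; and the remaining bilinear term $\bfx_0^t M_0^r \bfy_0$ vanishing on spanning pairs extends by bilinearity to give the fourth condition. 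The second condition is in fact automatic in the zero-insensitive setting, since $(M_0^r - I)\bfv = 0$ trivially; it is listed only for symmetry with the third. The converse is immediate, since assuming the four conditions makes every piece of the expanded identity vanish.

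The main subtle point---not really an obstacle, but the step requiring the most care---is the bookkeeping between the concrete generating sets arising from choices of $(a,b,k)$ and the abstract subspaces $U, V$ in whose terms the theorem is phrased. The translation works cleanly because the ranges of $\bfx$ and $\bfy$ can be varied independently and span affine versions of $\bfu^t + U^t$ and $\bfv + V$, and because each of the three non-trivial conditions lives in a separate degree of the resulting polynomial identity that can be isolated by specialising $\bfx_0$ or $\bfy_0$ to zero.
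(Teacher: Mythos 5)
Your proposal is correct and follows essentially the same route as the paper: translate quasiadditivity into the identity $\bfx^t M_0^r \bfy = \bfx^t\bfv + \bfu^t\bfy$ over the generating sets $\{\bfu^t\prod_{i\in I}M_{n_i}\}$ and $\{\prod_{j\in J}M_{n_j}\bfv\}$, decompose $\bfx = \bfu + \bfx_0$, $\bfy = \bfv + \bfy_0$ with $\bfx_0$, $\bfy_0$ spanning $U$ and $V$, and isolate the conditions by specialising $\bfx_0$ and $\bfy_0$ to zero and extending by (bi)linearity. Your side observation that the second condition is vacuous under zero-insensitivity (since $M_0\bfv=\bfv$ forces $(M_0^r-I)\bfv=0$) is also correct; the paper derives it from the $J=\emptyset$ specialisation instead of noting the redundancy, but this does not affect the equivalence.
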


\begin{proof}
The first statement \begin{math}\bfu^t \bfv =0\end{math} is equivalent to \begin{math}f(0)=0\end{math}, which we already know to be a necessary condition by Lemma~\ref{lem:simplefacts}. Note also that \begin{math}\bfu^t M_0^r \bfv = \bfu^t \bfv = 0\end{math} by the assumption that the linear representation is zero-insensitive.
For the remaining statements, we write the quasiadditivity condition in terms of our matrix representation as we did in the quasimultiplicative case:
  \begin{equation*}
\bfu^{t}\prod_{i\in I} M_{n_{i}} M_{0}^{r} \prod_{j\in J}M_{n_{j}} \bfv =    \bfu^{t}\prod_{i\in I} M_{n_{i}} \bfv +    \bfu^{t} \prod_{j\in J}M_{n_{j}} \bfv.
  \end{equation*}
Specifically, when \begin{math}J = \emptyset\end{math}, we get
\begin{equation*}
\bfu^{t}\prod_{i\in I} M_{n_{i}} \big( M_{0}^{r} - I \big) \bfv = \bfu^t \bfv = 0.\end{equation*}
Setting also \begin{math}I = \emptyset\end{math} gives us \begin{math}\bfu^{t} (M_0^r-I) \bfv = 0\end{math},
so together we obtain
\begin{equation*}\big( \bfu^{t}\prod_{i\in I} M_{n_{i}} - \bfu^t \big) \big( M_{0}^{r} - I \big) \bfv = 0.\end{equation*}
Since \begin{math}U^t\end{math} is spanned by all vectors of the form \begin{math}\bfu^{t}\prod_{i\in
  I} M_{n_{i}} - \bfu^t\end{math}, the second statement follows. The proof of the third statement is analogous. Finally, if we assume that the first three statements hold, then we find that
\begin{align*}
&\bfu^{t}\prod_{i\in I} M_{n_{i}} M_{0}^{r} \prod_{j\in J}M_{n_{j}} \bfv \\
&=
\big( \bfu^{t}\prod_{i\in I} M_{n_{i}} - \bfu^t \big) M_{0}^{r} \big( \prod_{j\in J}M_{n_{j}} \bfv - \bfv \big) +
\big( \bfu^{t}\prod_{i\in I} M_{n_{i}} - \bfu^t \big) M_{0}^{r} \bfv + 
\bfu^t M_{0}^{r} \big( \prod_{j\in J}M_{n_{j}} \bfv - \bfv \big)\\&\quad +
\bfu^t M_{0}^{r} \bfv \\
&= \big( \bfu^{t}\prod_{i\in I} M_{n_{i}} - \bfu^t \big) M_{0}^{r} \big( \prod_{j\in J}M_{n_{j}} \bfv - \bfv \big) +
\big( \bfu^{t}\prod_{i\in I} M_{n_{i}} - \bfu^t \big) \bfv + 
\bfu^t \big( \prod_{j\in J}M_{n_{j}} \bfv - \bfv \big) \\
&= \big( \bfu^{t}\prod_{i\in I} M_{n_{i}} - \bfu^t \big) M_{0}^{r} \big( \prod_{j\in J}M_{n_{j}} \bfv - \bfv \big)
+ \bfu^{t}\prod_{i\in I} M_{n_{i}} \bfv + \bfu^{t}\prod_{j\in J} M_{n_{j}} \bfv.
\end{align*}
Thus \begin{math}q\end{math}-quasiadditivity is equivalent to
\begin{equation*}\big( \bfu^{t}\prod_{i\in I} M_{n_{i}} - \bfu^t \big) M_{0}^{r} \big( \prod_{j\in J}M_{n_{j}} \bfv - \bfv \big) = 0\end{equation*}
being valid for all choices of \begin{math}I, J\end{math}, \begin{math}n_i\end{math} and \begin{math}n_j\end{math}. The desired fourth condition is clearly equivalent by definition of \begin{math}U\end{math} and \begin{math}V\end{math}.
\end{proof}

\begin{example}
  For the Hamming weight of the nonadjacent form, a zero-insensitive (and also minimal) linear representation for the vector \begin{math}(\hn(n),\hn(n+1),\hn(2n+1),1)^{t}\end{math} is
  \begin{equation*}
    M_{0}=
    \begin{pmatrix}
      1&0&0&0\\0&0&1&0\\1&0&0&1\\0&0&0&1
    \end{pmatrix},\quad
    M_{1}=
    \begin{pmatrix}
      0&0&1&0\\0&1&0&0\\0&1&0&1\\0&0&0&1
    \end{pmatrix},
  \end{equation*}
\begin{math}\bfu^{t}=(1,0,0,0)\end{math} and \begin{math}\bfv=(0,1,1,1)^{t}\end{math}.

The three vectors \begin{math}\mathbf{w}_1 = \bfu^{t}M_{1}-\bfu^{t}\end{math},
\begin{math}\mathbf{w}_2 = \bfu^{t}M_{1}^{2}-\bfu^{t}\end{math} and
\begin{math}\mathbf{w}_3 =  \bfu^{t}M_{1}M_{0}M_{1}-\bfu^{t}\end{math} are linearly
independent. If we let \begin{math}W\end{math} be the vector space spanned by those three, it is easily verified that \begin{math}M_{0}\end{math} and \begin{math}M_{1}\end{math} 
map the affine subspace \begin{math}\bfu^{t}+ W^t\end{math} to itself, so \begin{math}U=W\end{math} is spanned by these vectors.

Similarly, the three vectors \begin{math}M_{1}\bfv-\bfv\end{math},
\begin{math}M_{1}^{2}\bfv-\bfv\end{math} and
\begin{math}M_{1}M_{0}M_{1}\bfv-\bfv\end{math} span \begin{math}V\end{math}.

The first condition of Theorem~\ref{thm:q-reg-q-quasiadd} is obviously
true. We only have to verify the other three conditions with \begin{math}r=2\end{math} for the base vectors
of \begin{math}U\end{math} and \begin{math}V\end{math}, which is done easily. Thus \begin{math}\hn\end{math} is a \begin{math}2\end{math}-regular
sequence that is also \begin{math}2\end{math}-quasiadditive, as was also proved in Section~\ref{sec:exampl-q-quasiadd}.
\end{example}

Finding the vector spaces \begin{math}U\end{math}
and \begin{math}V\end{math} is not trivial. But in a certain special
case of \begin{math}q\end{math}-regular functions, we can give a sufficient condition for
\begin{math}q\end{math}-additivity, which is easier to check. These \begin{math}q\end{math}-regular functions are output sums of
transducers as defined
in~\cite{Heuberger-Kropf-Prodinger:2015:output}: a transducer
transforms the \begin{math}q\end{math}-ary expansion of an integer \begin{math}n\end{math}  deterministically into an output
sequence. We are
interested in the sum of this output sequence. Before we can state our
condition, we introduce our notation more precisely.

A transducer consists of a finite number of states, an
initial state, the
input alphabet $\{0,\ldots,q-1\}$, an output alphabet, which is a
subset of the real numbers, and transitions between two states with labels
$\eps\mid\delta$ for $\eps$ an input letter and $\delta$ an output
letter.  We assume that the transducer is complete and deterministic,
that is for every state $s$ and input letter $\eps$, there exists
exactly one transition leaving state $s$ with input label
$\eps$. Additionally every state has a final output. 

The transducer reads the $q$-ary expansion of an integer $n$, starting from the
least significant digit, as input, which defines a unique path starting at the initial state with the given
input as input label. The output of the transducer is the sequence of
output labels along this path together with the final output of the
final state of this path. The output sum is then the sum of this
output sequence.

The function \begin{math}\hn\end{math}, see
Example~\ref{ex:hnaf-trans}, as well as many other examples, can be
represented in this way.

This output sum of a transducer is a $q$-regular sequence
\cite{Heuberger-Kropf-Prodinger:2015:output}. To obtain a linear representation, we define  the matrix $N_{\eps}$ for $\eps\in\{0,\ldots,q-1\}$
to be the adjacency matrix of the transducer where we only take into
account transitions with input label $\eps$. Note that because our
transducer is complete and deterministic, there is exactly one entry
$1$ in every row. Without loss of
generality, we say that the initial state corresponds to the first row
and column. Furthermore, the $i$-th entry of the vector
$\bfdelta_{\eps}$ is the output label of the transition starting in
state $i$ with input label $\eps$. We define the matrices
\begin{equation*}
  M_{\eps}=\begin{pmatrix}
    N_{\eps}&\bfdelta_{\eps}&[\eps=0]I\\
    \boldsymbol{0}&1&\boldsymbol{0}\\
    0&0&[\eps=0]I
  \end{pmatrix},
\end{equation*}
where $I$ is an identity matrix of the correct size, and we set \begin{math}\bfu^{t}=(1,0,\ldots,0)\end{math} and
\begin{equation*}
\bfv=
\begin{pmatrix}
  \bfb(0)\\1\\\bfb(0)-N_{0}\bfb(0)-\bfdelta_{0}
\end{pmatrix},
\end{equation*}
where the entries of $\bfb(0)$ are the final outputs of the states.

Following \cite[Remark~3.10]{Heuberger-Kropf-Prodinger:2015:output},
the output sum of a transducer is $q$-regular with the linear
representation $(\bfu, (M_{\eps})_{0\leq\eps<q}, \bfv)$.

\begin{example}\label{ex:hnaf-trans}
\begin{figure}
\newcommand{\Bold}[1]{\mathbf{#1}}
\begin{tikzpicture}[auto, initial text=, >=latex, accepting text=, accepting/.style=accepting by arrow, accepting distance=5ex, every state/.style={minimum
    size=1.3em}]
\node[state, initial] (v0) at (0.000000,
0.000000) {};
\path[->] (v0.270.00) edge node[rotate=450.00, anchor=south] {$0$} ++(270.00:5ex);
\node[state] (v1) at (3.000000, 0.000000) {};
\path[->] (v1.270.00) edge node[rotate=450.00, anchor=south] {$0$} ++(270.00:5ex);
\node[state] (v2) at (6.000000, 0.000000) {};
\path[->] (v2.270.00) edge node[rotate=450.00, anchor=south] {$1$} ++(270.00:5ex);
\path[->] (v0.10.00) edge node[rotate=0.00, anchor=south] {$1\mid 1$} (v1.170.00);
\path[->] (v1.10.00) edge node[rotate=0.00, anchor=south] {$1\mid 0$} (v2.170.00);
\path[->] (v0) edge[loop above] node {$0\mid 0$} ();
\path[->] (v2.190.00) edge node[rotate=360.00, anchor=north] {$0\mid 1$} (v1.350.00);
\path[->] (v2) edge[loop above] node {$1\mid 0$} ();
\path[->] (v1.190.00) edge node[rotate=360.00, anchor=north] {$0\mid 0$} (v0.350.00);
\end{tikzpicture}
\caption{Transducer to compute the Hamming weight of the nonadjacent form.}
\label{fig:NAF}
\end{figure}
  The output sum of the transducer in Figure~\ref{fig:NAF} is exactly
  the Hamming weight of the nonadjacent form $\hn(n)$ (see, e.g., \cite{Heuberger-Kropf-Prodinger:2015:output}). The matrices
  and vectors corresponding to this transducer are
  \begin{align*}
    N_{0}=
    \begin{pmatrix}
      1&0&0\\
      1&0&0\\
      0&1&0
    \end{pmatrix},\qquad N_{1}=
    \begin{pmatrix}
      0&1&0\\
      0&0&1\\
      0&0&1
    \end{pmatrix},
  \end{align*}
 $\bfdelta_{0}^{t}=(0,0,1)$, $\bfdelta_{1}^{t}=(1,0,0)$ and $\bfb(0)^{t}=(0,0,1)$.
\end{example}

To state our condition, we also introduce the notion of a reset
sequence: a reset sequence is an input sequence which always leads to
the same state no matter in which state of the transducer we start.
Not every transducer has a reset sequence, not even every strongly
connected transducer has one. In many cases arising from combinatorics
and digit expansions
the reset sequence consists only of zeros.
\begin{prop}\label{proposition:q-add-transducer}
  The output sum of a connected transducer is \begin{math}q\end{math}-additive with parameter \begin{math}r\end{math} if the following
  conditions are satisfied:
  \begin{itemize}
  \item The transducer has the reset sequence \begin{math}0^{r}\end{math} leading to the
    initial state.
  \item For every state, the output sum along the path of the reset
    sequence \begin{math}0^{r}\end{math} equals the final output of
    this state.
  \item Additional zeros at the end of the input sequence do not
    change the output sum.
  \end{itemize}
\end{prop}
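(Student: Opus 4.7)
The plan is to simulate the transducer on the $q$-ary expansion of $n = q^{k+r}a + b$, read from the least significant digit, and to track the accumulated output as the three hypotheses are applied in sequence. Since $b < q^k$, the input sequence decomposes as
\[w = w_b \cdot 0^r \cdot w_a,\]
where $w_b$ is the digit sequence of $b$ followed by as many zeros as are needed to reach length exactly $k$, and $w_a$ is the digit sequence of $a$. By the definition of the output sum, processing $w$ from the initial state yields $f(n)$, while processing the plain expansions of $a$ and of $b$ from the initial state yields $f(a)$ and $f(b)$ respectively.

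First I would analyse the configuration reached after reading the prefix $w_b$ starting from the initial state $s_0$. Because $w_b$ differs from the plain expansion of $b$ only by appended zeros, the third hypothesis implies that the sum of transition outputs produced so far, plus the final output of the state $S$ that has been reached, equals exactly $f(b)$. Next I would read the block $0^r$ starting from $S$: by the first hypothesis the transducer returns to $s_0$, and by the second hypothesis the transition outputs accumulated along this block equal the final output of $S$. Consequently, after $w_b\cdot 0^r$ has been processed, the cumulative sum of transition outputs is precisely $f(b)$ and the current state is again $s_0$.

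From $s_0$, reading the remaining block $w_a$ is indistinguishable from running the transducer on the expansion of $a$ from scratch, so the transition outputs collected while reading $w_a$, together with the final output of the state eventually reached, sum to exactly $f(a)$. Adding this to the contribution $f(b)$ already accumulated gives the total output sum $f(n) = f(a) + f(b)$, which is the required quasiadditivity identity with parameter $r$.

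The only point that requires genuine care is the bookkeeping of what counts as ``accumulated output'' at intermediate stages, namely distinguishing between transition outputs alone and transition outputs together with the final output of the current state; the third hypothesis is used to identify the running total after $w_b$ with $f(b)$, and the first two hypotheses ensure that the subsequent $0^r$ block absorbs precisely the missing final output while restoring the initial state. Once this bookkeeping is in place, no further obstacle arises, and the statement follows by a straightforward concatenation argument.
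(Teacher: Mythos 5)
Your argument is correct, and it reaches the conclusion by a more elementary route than the paper. Both proofs rest on the same decomposition of the input word for $q^{k+r}a+b$ (read from the least significant digit) into the padded expansion of $b$, the block $0^r$, and the expansion of $a$, and both use the three hypotheses in the same roles: the third to absorb the padding zeros, the first to reset to the initial state, and the second to convert the output collected along $0^r$ into the final output of the intermediate state. The difference is that the paper works entirely inside the linear representation $(\bfu,(M_{\eps})_{\eps},\bfv)$: it encodes the reset property as the matrix identity $\prod_{j}N_{n_j}N_0^r = (\text{all mass in the first column})$, encodes the second hypothesis as $\prod_j N_{n_j}\bfb(0)=\prod_j N_{n_j}(I+\cdots+N_0^{r-1})\bfdelta_0$, and then computes $\bfb(q^{k+r}m+n)$ from the recursion $\bfb(qn+\eps)=N_\eps\bfb(n)+\bfdelta_\eps$; it also needs connectivity and the third hypothesis to extend that recursion to the boundary case $qn+\eps=0$. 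Your direct simulation of the deterministic run sidesteps the linear-algebra bookkeeping (and the connectivity issue at $n=0$) and makes the role of each hypothesis more transparent; what it gives up is the explicit matrix identities, which the paper reuses elsewhere in its treatment of $q$-regular functions. Two small points worth making explicit in your write-up: the degenerate cases $a=0$ (empty $w_a$) and $b=0$ ($w_b=0^k$) are covered by your bookkeeping, but one should note that the hypotheses force the final output of the initial state to be $0$, so that $f(0)=0$ and the identity $f(q^{k+r}\cdot 0+b)=f(0)+f(b)$ is consistent; and your reading of ``output sum along the path of the reset sequence'' as the sum of transition outputs only (excluding the final output of the state reached) is the intended one and matches the paper's formulas.
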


\begin{proof}
Let $\bff(n)$ be the vector corresponding to the linear representation
$(\bfu, (M_{\eps})_{0\leq\eps<q}, \bfv)$ as defined
in~\eqref{eq:q-regular-recursive}. By induction, we obtain that the
middle coordinate of $\bff(n)$ is always $1$ and the coordinates below
are always $0$ if $n\geq1$. We denote the coordinates above by $\bfb(n)$.
The output sum of the transducer is
the first coordinate of $\bfb(n)$. By~\eqref{eq:q-regular-recursive},
we obtain the recursion
\begin{equation}\label{eq:output-sum-recursive}
  \bfb(qn+\eps)=N_{\eps}\bfb(n)+\bfdelta_{\eps}
\end{equation}
if \begin{math}qn+\eps>0\end{math}.

 The third condition ensures that leading zeros does not
 change anything. Thus the connectivity of the underlying graph
 implies that \eqref{eq:output-sum-recursive} also holds
for \begin{math}qn+\eps=0\end{math}. Thus, the last
coordinates of \begin{math}\bfv\end{math} are zero and we could reduce the dimension of the
linear representation.

Let \begin{math}J\end{math} be finite and \begin{math}n_{j}\in\{0,\ldots,q-1\}\end{math} for \begin{math}j\in J\end{math}. The first condition implies that
\begin{equation*}
  \prod_{j\in J}N_{n_{j}}N_{0}^{r}=
  \begin{pmatrix}
    1&0&\cdots&0\\
    \vdots&\vdots&\ddots&\vdots\\
    1&0&\cdots&0
  \end{pmatrix},
\end{equation*}
and the second condition implies that
\begin{equation*}
  \prod_{j\in J}N_{n_{j}}\bfb(0)=\prod_{j\in J}N_{n_{j}}(I+\cdots+N_{0}^{r-1})\bfdelta_{0}.
\end{equation*}

Using \eqref{eq:output-sum-recursive} recursively together with these two conditions gives
\begin{align*}
  \bfb(q^{k+r}m+n)&=\prod_{j=0}^{k-1}N_{n_{j}}\bfb(q^{r}m)+\sum_{j=0}^{k-1}\prod_{i=0}^{j-1}N_{n_{i}}\bfdelta_{n_{j}}\\
&=\prod_{j=0}^{k-1}N_{n_{j}}N_{0}^{r}\bfb(m)+\prod_{j=0}^{k-1}N_{n_{j}}(I+\cdots+N_{0}^{r-1})\bfdelta_{0}+\bfb(n)-\prod_{j=0}^{k-1}N_{n_{j}}\bfb(0)\\
&=\begin{pmatrix}
    1&0&\cdots&0\\
    \vdots&\vdots&\ddots&\vdots\\
    1&0&\cdots&0
  \end{pmatrix}\bfb(m)+\bfb(n)
\end{align*}
for all \begin{math}n\end{math} with $q$-ary digit expansion \begin{math}(n_{k-1}\cdots n_{0})\end{math} and all
\begin{math}m\end{math}. This implies that the first coordinate of \begin{math}\bfb(n)\end{math} is \begin{math}q\end{math}-quasiadditive.
\end{proof}

\begin{example}
  We now continue Example~\ref{ex:hnaf-trans} and check whether the
  conditions of Proposition~\ref{proposition:q-add-transducer} are
  satisfied for the transducer given in Figure~\ref{fig:NAF}. First, a
  reset sequence is $00$ (i.e., $r=2$) and leads to the initial
  state. Second, the output sum  along the path of the reset sequence
  is $0$, $0$ and $1$ for the left, the middle and the right state,
  respectively, which is exactly the final output of the corresponding
  state. Furthermore, leading zeros do not change the output sum. Thus we have another proof that $\hn(n)$ is a 2-quasiadditive function with parameter
  $r=2$.
\end{example}

\section{A central limit theorem for $q$-quasiadditive and -multiplicative functions}
In this section, we prove a central limit theorem for
\begin{math}q\end{math}-quasimultiplicative functions taking only positive values.
By Proposition~\ref{prop:trivial}, this also implies a central
limit theorem for \begin{math}q\end{math}-quasiadditive functions.

To this end, we define a generating function: let \begin{math}f\end{math} be a \begin{math}q\end{math}-quasimultiplicative function with positive values, let \begin{math}\M_k\end{math} be the set of all nonnegative integers less than \begin{math}q^k\end{math} (i.e., those positive integers whose \begin{math}q\end{math}-ary expansion needs at most \begin{math}k\end{math} digits), and set
\begin{equation*}F(x,t) = \sum_{k \geq 0} x^k \sum_{n \in \M_k} f(n)^t.\end{equation*}
The decomposition of Proposition~\ref{prop:split} now translates
directly to an alternative representation for \begin{math}F(x,t)\end{math}: let \begin{math}\B\end{math} be
the set of all positive integers not divisible by \begin{math}q\end{math} whose \begin{math}q\end{math}-ary representation does not contain the block \begin{math}0^{r}\end{math}, let \begin{math}\ell(n)\end{math} denote the length of the \begin{math}q\end{math}-ary representation of \begin{math}n\end{math}, and define the function \begin{math}B(x,t)\end{math} by
\begin{equation*}B(x,t) = \sum_{n \in \B} x^{\ell(n)} f(n)^t.\end{equation*}
We remark that in the special case where \begin{math}q=2\end{math} and \begin{math}r=1\end{math}, this simplifies greatly to
\begin{equation}\label{eq:q2_r1}
B(x,t) = \sum_{k \geq 1} x^{k} f(2^k-1)^t.
\end{equation}

\begin{prop}\label{prop:gf}
The generating function \begin{math}F(x,t)\end{math} can be expressed as
\begin{equation*}F(x,t) = \frac{1}{1-x} \cdot \frac{1}{1 - \frac{x^r}{1-x} B(x,t)} \Big( 1 + (1+x+\cdots+x^{r-1})B(x,t) \Big) = \frac{1+(1+x+\cdots+x^{r-1})B(x,t)}{1-x-x^rB(x,t)}.\end{equation*}
\end{prop}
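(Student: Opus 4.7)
The plan is to express $F(x,t)$ in terms of $B(x,t)$ via the canonical decomposition of Proposition~\ref{prop:split} and then to simplify using the identity $1+x+\cdots+x^{r-1} = (1-x^r)/(1-x)$.

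First I would split off the $n = 0$ contribution and exchange the order of summation in the remaining sum. Since $f(0) = 1$ by Lemma~\ref{lem:simplefacts}, and since for $n \geq 1$ the membership $n \in \M_k$ is equivalent to $k \geq \ell(n)$, one obtains
\begin{equation*}
F(x,t) \;=\; \sum_{k \geq 0} x^k \;+\; \sum_{n \geq 1} f(n)^t \sum_{k \geq \ell(n)} x^k \;=\; \frac{1}{1-x}\bigl(1 + G(x,t)\bigr),
\end{equation*}
where $G(x,t) := \sum_{n \geq 1} f(n)^t \, x^{\ell(n)}$.

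Next I would apply Proposition~\ref{prop:split}. The $q$-ary representation of each $n \geq 1$ admits a unique factorisation
\begin{equation*}
m_1 \, 0^{e_1} \, m_2 \, 0^{e_2} \, \cdots \, m_{\ell} \, 0^{e_{\ell}}
\end{equation*}
with $\ell \geq 1$, each $m_i \in \B$, $e_i \geq r$ for $1 \leq i \leq \ell - 1$, and $e_{\ell} \geq 0$: the interior $0^{e_i}$-strings are precisely the maximal runs of $\geq r$ consecutive zeros lying strictly between nonzero digits of the representation, and the intervening $m_i$ automatically lie in $\B$ since they start and end with nonzero digits and contain no $0^r$ as a subword. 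Identifying $n_i = m_i q^{e_i}$ recovers Proposition~\ref{prop:split}, so
\begin{equation*}
f(n)^t \;=\; \prod_{i=1}^{\ell} f(m_i)^t, \qquad \ell(n) \;=\; \sum_{i=1}^{\ell}\ell(m_i) + \sum_{i=1}^{\ell} e_i.
\end{equation*}

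Translating the factorisation into generating functions, the $\ell$ blocks in $\B$ contribute $B(x,t)^{\ell}$, the $\ell - 1$ interior separators contribute $\bigl(x^{r}/(1-x)\bigr)^{\ell-1}$, and the terminal run of $e_\ell \geq 0$ zeros contributes $1/(1-x)$. Summing the geometric series over $\ell \geq 1$ yields
\begin{equation*}
G(x,t) \;=\; \frac{B(x,t)}{1-x} \cdot \frac{1}{1 - \frac{x^r B(x,t)}{1-x}} \;=\; \frac{B(x,t)}{1 - x - x^r B(x,t)}.
\end{equation*}
Substituting into $F = (1+G)/(1-x)$, combining over a common denominator, and using $(1-x)(1 + x + \cdots + x^{r-1}) = 1 - x^r$ to factor one copy of $(1-x)$ out of the numerator produces both forms asserted in the proposition. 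The one step that warrants care is the bookkeeping of the trailing zeros $e_\ell$, which are unconstrained rather than required to be $\geq r$; the cancellation of the resulting $1/(1-x)$ against a factor of $(1-x)$ in the numerator is exactly what turns the asymmetric geometric series into the finite polynomial $1 + x + \cdots + x^{r-1}$ appearing in the final answer.
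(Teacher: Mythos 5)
Your proof is correct and follows essentially the same route as the paper: both transfer the unique block factorisation underlying Proposition~\ref{prop:split} into a product of generating functions, with leading zeros, $\B$-blocks, and separating runs of zeros each contributing a factor. The only cosmetic difference is that the paper groups the trailing zeros with the final $\B$-block (producing the factor $1+(1+x+\cdots+x^{r-1})B(x,t)$ directly), whereas you keep them as an unconstrained geometric factor $1/(1-x)$ and recover the stated form via $1-x^r=(1-x)(1+x+\cdots+x^{r-1})$; your write-up is more detailed but not different in substance.
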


\begin{proof}
The first factor stands for the initial sequence of leading zeros, the
second factor for a (possibly empty) sequence of blocks consisting of
an element of \begin{math}\B\end{math} and \begin{math}r\end{math} or more zeros, and the last factor for the
final part, which may be empty or an element of \begin{math}\B\end{math} with up to \begin{math}r-1\end{math} zeros (possibly none) added at the end.
\end{proof}

Under suitable assumptions on the growth of a \begin{math}q\end{math}-quasiadditive or \begin{math}q\end{math}-quasimultiplicative function, we can exploit the expression of Proposition~\ref{prop:gf} to prove a central limit theorem.

\begin{defi}
  We say that a function \begin{math}f\end{math} has \emph{at most polynomial growth} if
  \begin{math}f(n)=O(n^{c})\end{math} and \begin{math}f(n) = \Omega(n^{-c})\end{math} for a fixed \begin{math}c\geq
  0\end{math}. We say that \begin{math}f\end{math} has \emph{at most logarithmic growth} if
  \begin{math}f(n)=O(\log n)\end{math}.
\end{defi}

Note that our definition of at most polynomial growth is slightly
different than usual: the extra condition \begin{math}f(n) =
  \Omega(n^{-c})\end{math} ensures that the absolute value
of \begin{math}\log f(n)\end{math} does not grow too fast.

\begin{lemma}\label{lemma:singularity} Assume that the positive, \begin{math}q\end{math}-quasimultiplicative function \begin{math}f\end{math} has at most polynomial growth. 

There exist positive constants \begin{math}\delta\end{math} and \begin{math}\epsilon\end{math} such that
\begin{itemize}
\item \begin{math}B(x,t)\end{math} has radius of convergence \begin{math}\rho(t) > \frac1q\end{math} whenever \begin{math}|t| \leq \delta\end{math}.
\item For \begin{math}|t| \leq \delta\end{math}, the equation \begin{math}x + x^r B(x,t) = 1\end{math} has a complex solution \begin{math}\alpha(t)\end{math} with \begin{math}|\alpha(t)| < \rho(t)\end{math} and no other solutions with modulus \begin{math}\leq (1+\epsilon)|\alpha(t)|\end{math}. 
\item Thus the generating function \begin{math}F(x,t)\end{math} has a simple pole at \begin{math}\alpha(t)\end{math} and no further singularities of modulus \begin{math}\leq (1+ \epsilon)|\alpha(t)|\end{math}. 
\item Finally, \begin{math}\alpha\end{math} is an analytic function of \begin{math}t\end{math} for \begin{math}|t| \leq \delta\end{math}.
\end{itemize}
\end{lemma}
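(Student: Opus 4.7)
My plan is to focus on the denominator $H(x,t) = 1 - x - x^{r}B(x,t)$ of the expression for $F(x,t)$ in Proposition~\ref{prop:gf}: the four bullets all reduce to statements about the zero set of $H$ together with the analyticity of $B$ itself. First I would prove the radius-of-convergence claim. The set $\B$ consists of $q$-ary strings that end in a nonzero digit and avoid the block $0^{r}$, and a standard transfer-matrix (or linear-recurrence) argument gives $|\{n\in\B:\ell(n)=k\}| = O(\lambda^{k})$ for some constant $\lambda<q$, since forbidding a fixed block strictly lowers the exponential growth rate below $q$. Combined with the polynomial-growth hypothesis, $|f(n)^{t}| = f(n)^{\operatorname{Re} t} = O(n^{c|t|}) = O(q^{kc\delta})$ whenever $\ell(n)=k$ and $|t|\leq\delta$, so $B(x,t)$ converges for $|x|<1/(\lambda q^{c\delta})$, and this exceeds $1/q$ once $\delta$ is small enough. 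The same uniform estimates make $B$ jointly analytic in $(x,t)$ on the polydisk of interest.

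Next I would locate the dominant singularity. At $t=0$ one has $F(x,0) = \sum_{k\geq 0} q^{k} x^{k} = 1/(1-qx)$, which via Proposition~\ref{prop:gf} forces $H(1/q,0)=0$, equivalently $B(1/q,0) = q^{r-1}(q-1)$. The partial derivative $H_{x}(1/q,0) = -1 - rq^{-(r-1)}B(1/q,0) - q^{-r}B_{x}(1/q,0)$ is strictly negative because $B(\cdot,0)$ has nonnegative Taylor coefficients, so the implicit function theorem produces a unique analytic branch $\alpha(t)$ with $\alpha(0)=1/q$ and $H(\alpha(t),t)\equiv 0$ for $|t|\leq\delta$, after possibly shrinking $\delta$. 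This settles the existence and analyticity of $\alpha$ as well as the simplicity of the zero.

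The delicate step is uniqueness of $\alpha(t)$ on a disk of radius strictly larger than $|\alpha(t)|$. For $t=0$ I would use the triangle inequality together with the nonnegativity of the coefficients of $B(\cdot,0)$: on $|x|\leq 1/q$ one has $|x + x^{r}B(x,0)| \leq |x| + |x|^{r}B(|x|,0) \leq 1/q + q^{-r}B(1/q,0) = 1$, and equality throughout the chain forces both $x$ and $x^{r}B(x,0)$ to be nonnegative real summing to $1$, hence $x=1/q$. Compactness then supplies an $\epsilon>0$ such that $H(\cdot,0)$ has no other zero in $|x|\leq(1+\epsilon)/q$. A Rouché argument on the circle $|x|=(1+\epsilon)|\alpha(t)|$, using joint continuity of $H$ in $(x,t)$, lets me shrink $\delta$ so that $\alpha(t)$ remains the unique zero of $H(\cdot,t)$ in this disk for all $|t|\leq\delta$. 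Since the numerator $1+(1+x+\cdots+x^{r-1})B(x,t)$ is strictly positive at $(1/q,0)$ and hence nonvanishing in a neighborhood, $F$ inherits a simple pole precisely at $\alpha(t)$ and is analytic elsewhere in the disk of radius $(1+\epsilon)|\alpha(t)|$.

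The main obstacle is this last step: the triangle inequality at $t=0$ is clean thanks to coefficient positivity, but transferring strict uniqueness first from $|x|\leq 1/q$ to a genuinely larger disk, and then across a small complex perturbation in $t$ (where positivity is lost), requires the careful compactness/Rouché combination above. Once this is in hand, everything else is bookkeeping with the implicit function theorem and the explicit form of $F(x,t)$ from Proposition~\ref{prop:gf}.
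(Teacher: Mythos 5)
Your proof is correct, and its overall skeleton (uniform analyticity of $B$ on a polydisk via the counting of $\B$ and the polynomial-growth hypothesis, a simple zero of the denominator at $(1/q,0)$, then the implicit function theorem) matches the paper's. The key step is handled by a genuinely different argument, though. The paper computes $B(x,0)=\frac{(q-1)x}{1-(q-1)x-\cdots-(q-1)x^{r}}$ explicitly and factors
\begin{equation*}
1-x-x^{r}B(x,0)=\frac{(1-x)(1-qx)}{1-qx+(q-1)x^{r+1}},
\end{equation*}
which exhibits $x=1/q$ as the unique simple zero inside the disk of convergence in one stroke; the remaining assertions are then dispatched as ``simple consequences of the implicit function theorem.'' You avoid the explicit computation entirely: you read off $H(1/q,0)=0$ from $F(x,0)=1/(1-qx)$ together with the positivity of the numerator, get simplicity from $H_x(1/q,0)\leq-1$, and prove uniqueness on $|x|\leq 1/q$ by a triangle-inequality/positivity argument, extending to a strictly larger disk by isolation of zeros and to $|t|\leq\delta$ by Rouch\'e. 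Your route is less computational, and it is more honest about the step the paper compresses: the uniformity in $t$ of the claim that there are no other zeros of modulus $\leq(1+\epsilon)|\alpha(t)|$ genuinely requires the compactness/Rouch\'e argument you describe, not just the implicit function theorem. The paper's factorization, in exchange, gives uniqueness of the zero in the \emph{entire} disk of convergence at $t=0$ for free. One small point worth making explicit in your equality analysis: equality in $|x^{r}B(x,0)|\leq|x|^{r}B(|x|,0)$ forces all phases $e^{i\theta\ell(n)}$, $n\in\B$, to coincide, and it is because $\B$ contains elements of both length $1$ and length $2$ that this forces $x$ to be positive real; this is where the structure of $\B$ enters and should be stated.
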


\begin{proof}
The polynomial growth of \begin{math}f\end{math} implies that \begin{math}C^{-1}\phi^{-\ell(n)} \leq f(n)\leq C
\phi^{\ell(n)}\end{math} for some positive constants \begin{math}C\end{math} and \begin{math}\phi\end{math}.
Moreover, \begin{math}\B\end{math} contains \begin{math}O(\beta^{\ell})\end{math} elements whose \begin{math}q\end{math}-ary
expansion has length at most \begin{math}\ell\end{math}, where \begin{math}\beta < q\end{math} is a root of
the polynomial \begin{math}x^r-(q-1)x^{r-1}-\cdots-(q-1)x-(q-1)\end{math}. This implies
that \begin{math}B(x,t)\end{math} is indeed an analytic function of \begin{math}x\end{math} for \begin{math}|x| < \beta^{-1}
\phi^{\delta}\end{math} whenever \begin{math}|t| \leq \delta\end{math}. For suitably small
\begin{math}\delta\end{math}, \begin{math}\beta^{-1} \phi^{\delta}\end{math} is greater than \begin{math}\frac1q\end{math}, which proves the first part of
our statement. Next note that
\begin{equation*}
B(x,0)=\frac{(q-1)x}{1-(q-1)x-\cdots-(q-1)x^{r}},
\end{equation*}
and it follows by an easy calculation that
$$1 - x - x^r B(x,0) = \frac{(1-x)(1-qx)}{1 - q x + (q-1)x^{r+1}}.$$
Hence \begin{math}\alpha(0) = \frac1q\end{math} is the only solution of the equation \begin{math}x + x^r B(x,0) = 1\end{math}, and it is a simple root.
All remaining statements are therefore simple consequences of the implicit function theorem.
\end{proof}

\begin{lemma}\label{lem:sing_anal}
Assume that the positive, \begin{math}q\end{math}-quasimultiplicative function \begin{math}f\end{math} has at most polynomial growth.

With \begin{math}\delta\end{math} and \begin{math}\epsilon\end{math} as in the previous lemma, we have, uniformly in \begin{math}t\end{math},
\begin{equation*}[x^k] F(x,t) = \kappa(t) \cdot \alpha(t)^{-k} \big(1 + O((1+\epsilon)^{-k})\big)\end{equation*}
for some function \begin{math}\kappa\end{math}. Both \begin{math}\alpha\end{math} and \begin{math}\kappa\end{math} are analytic functions of \begin{math}t\end{math} for \begin{math}|t| \leq \delta\end{math}, and \begin{math}\kappa(t) \neq 0\end{math} in this region.
\end{lemma}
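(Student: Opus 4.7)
The plan is to apply standard singularity analysis (pole extraction plus contour shifting) to the explicit rational-type expression for $F(x,t)$ obtained in Proposition~\ref{prop:gf}, exploiting the detailed singularity picture already established in Lemma~\ref{lemma:singularity}. Write
\begin{equation*}
F(x,t) = \frac{P(x,t)}{Q(x,t)}, \qquad P(x,t) = 1 + (1+x+\cdots+x^{r-1})B(x,t), \qquad Q(x,t) = 1 - x - x^r B(x,t).
\end{equation*}
By Lemma~\ref{lemma:singularity}, for $|t| \le \delta$ the only singularity of $F(\,\cdot\,,t)$ in the disk $|x| \le (1+\epsilon)|\alpha(t)|$ is a simple pole at $x = \alpha(t)$, and $\alpha$ is analytic there. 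The first step is therefore to isolate this pole by defining
\begin{equation*}
\kappa(t) = -\frac{1}{\alpha(t)} \cdot \frac{P(\alpha(t),t)}{\partial_x Q(\alpha(t),t)},
\end{equation*}
so that
\begin{equation*}
G(x,t) := F(x,t) - \frac{\kappa(t)}{1-x/\alpha(t)}
\end{equation*}
is analytic in $x$ on a neighbourhood of $|x| \le (1+\epsilon)|\alpha(t)|$ for every fixed $t$ with $|t|\le\delta$. Since $B$ is analytic in both variables on the relevant bidisk (as shown in the proof of the previous lemma), $P$, $Q$, $\partial_x Q$ and $\alpha$ are all analytic in $t$, hence so is $\kappa$.

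Next I would verify $\kappa(t)\neq 0$. The cleanest way is to evaluate at $t=0$: using $B(x,0)=\frac{(q-1)x}{1-(q-1)x-\cdots-(q-1)x^r}$ and $\alpha(0)=1/q$, a short calculation gives $P(1/q,0) = q^r \neq 0$, hence $\kappa(0)\neq 0$, and by continuity $\kappa(t)\neq 0$ on $|t|\le \delta$ after possibly shrinking $\delta$.

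For the coefficient asymptotics, I would extract the pole contribution directly,
\begin{equation*}
[x^k] \frac{\kappa(t)}{1-x/\alpha(t)} = \kappa(t)\,\alpha(t)^{-k},
\end{equation*}
and bound $[x^k]G(x,t)$ by Cauchy's formula along the circle $|x| = (1+\epsilon)|\alpha(t)|$, giving
\begin{equation*}
|[x^k] G(x,t)| \le \bigl((1+\epsilon)|\alpha(t)|\bigr)^{-k}\cdot \max_{|x|=(1+\epsilon)|\alpha(t)|} |G(x,t)|.
\end{equation*}
Combined, this yields the claimed expansion $[x^k]F(x,t) = \kappa(t)\alpha(t)^{-k}\bigl(1+O((1+\epsilon)^{-k})\bigr)$.

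The main obstacle is the uniformity of the constants in $t$. One needs a uniform upper bound for $|G(x,t)|$ on the family of contours $|x|=(1+\epsilon)|\alpha(t)|$ and a uniform lower bound $|\kappa(t)| \ge c > 0$. Both follow from compactness: $\{|t|\le\delta\}$ is compact, $\alpha$ and $\kappa$ are continuous and $\kappa$ is nowhere vanishing, while $G(x,t)$ is jointly continuous on the compact set $\{(x,t): |t|\le\delta,\ |x| \le (1+\epsilon)|\alpha(t)|\}$ after possibly shrinking $\epsilon$ slightly so that the contour stays strictly inside the analyticity region guaranteed by Lemma~\ref{lemma:singularity}. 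With these uniform bounds in hand, the $O(\cdot)$ constant in the error term is independent of $t$, which is exactly the uniformity asserted in the statement.
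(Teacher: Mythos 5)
Your proposal is correct and follows essentially the same route as the paper, which simply invokes standard singularity analysis (meromorphic coefficient asymptotics) applied to the singularity structure established in Lemma~\ref{lemma:singularity}; you have merely written out the details (pole subtraction, Cauchy bound on the circle $|x|=(1+\epsilon)|\alpha(t)|$, compactness for uniformity) that the paper delegates to the reference. Your computation $P(1/q,0)=q^r$ checks out and gives $\kappa(0)=1$, consistent with the value used later in the proof of Theorem~\ref{thm:clt-mult}.
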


\begin{proof}
This follows from the previous lemma by means of singularity analysis,
see \cite[Chapter VI]{Flajolet-Sedgewick:ta:analy}.
\end{proof}

\begin{theorem}\label{thm:clt-mult}
Assume that the positive, \begin{math}q\end{math}-quasimultiplicative function \begin{math}f\end{math} has at most polynomial growth.

Let \begin{math}N_k\end{math} be a randomly chosen integer in \begin{math}\{0,1,\ldots,q^k-1\}\end{math}. The
random variable \begin{math}L_k = \log f(N_k)\end{math} has mean \begin{math}\mu k + O(1)\end{math} and
variance \begin{math}\sigma^2 k + O(1)\end{math}, where the two constants are given by
\begin{equation*}\mu = \frac{B_t(1/q,0)}{q^{2r}}\end{equation*}
and
\begin{multline}
  \sigma^2= -B_{t}(1/q,0)^{2} {q}^{-4r+1}(q-1)^{-1} + 2B_{t}(1/q,0)^{2} {q}^{-3r+1}(q-1)^{-1} -B_{t}(1/q,0)^{2}{q}^{-4r}(q-1)^{-1} \\-
     4rB_{t}(1/q,0)^{2} {q}^{-4r} + B_{tt}(1/q,0){q}^{-2r}
     - 2B_{t}(1/q,0)
        B_{tx}(1/q,0)
{q}^{-4r-1} .
\end{multline}
If \begin{math}f\end{math} is not the constant function \begin{math}f \equiv 1\end{math}, then \begin{math}\sigma^2 \neq 0\end{math} and the normalised random variable \begin{math}(L_k - \mu k)/(\sigma \sqrt{k})\end{math} converges weakly to a standard Gaussian distribution.
\end{theorem}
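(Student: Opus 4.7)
The plan is to recognise the generating function $F(x,t)$ as essentially the moment generating function of $L_k$ and then invoke Hwang's quasi-powers theorem. Since $|\M_k|=q^k$ and $N_k$ is uniform on $\M_k$, one has
\[
[x^k]F(x,t) \;=\; \sum_{n\in\M_k} f(n)^t \;=\; q^k\,\E\bigl[e^{tL_k}\bigr],
\]
so Lemma~\ref{lem:sing_anal} gives, uniformly in $|t|\le\delta$,
\[
\E\bigl[e^{tL_k}\bigr] \;=\; \kappa(t)\,\bigl(q\alpha(t)\bigr)^{-k}\bigl(1+O((1+\eps)^{-k})\bigr).
\]
Setting $A(t)=-\log\bigl(q\alpha(t)\bigr)$, which satisfies $A(0)=0$ since $\alpha(0)=1/q$, this is exactly the quasi-power form with $A$ and $\kappa$ analytic near $0$. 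The quasi-powers theorem (see \cite[Chapter~IX]{Flajolet-Sedgewick:ta:analy}) then delivers $\E L_k=A'(0)\,k+O(1)$ and $\V L_k=A''(0)\,k+O(1)$, together with asymptotic normality of $(L_k-\mu k)/(\sigma\sqrt k)$ provided $\sigma^2:=A''(0)>0$.

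Next I would compute the constants $\mu=A'(0)$ and $\sigma^2=A''(0)$ by implicit differentiation of the defining identity $\alpha(t)+\alpha(t)^r B(\alpha(t),t)=1$. The base values at $(x,t)=(1/q,0)$ are read off from the closed form $B(x,0)=(q-1)x/\bigl(1-(q-1)x-\cdots-(q-1)x^r\bigr)$ derived in the proof of Lemma~\ref{lemma:singularity}; in particular $B(1/q,0)=(q-1)q^{r-1}$, and a short calculation gives $\partial_x\bigl(x+x^r B(x,0)\bigr)\big|_{x=1/q}=q^{r+1}$. A single differentiation at $t=0$ then yields $\alpha'(0)=-B_t(1/q,0)/q^{2r+1}$ and hence $\mu=-q\alpha'(0)=B_t(1/q,0)/q^{2r}$; a second differentiation, combined with the first-order identity to eliminate $\alpha'(0)$, produces the displayed formula for $\sigma^2$ after a routine but lengthy algebraic simplification.

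Finally, I have to verify $\sigma^2\ne 0$ whenever $f\not\equiv 1$. The analytic framework alone does not force this, so one must bring in the combinatorial structure: by Proposition~\ref{prop:split}, if $f\not\equiv 1$ then some $n_0\in\B$ satisfies $f(n_0)\ne 1$, i.e.\ $\log f(n_0)\ne 0$. A random element of $\M_k$ contains in expectation a linear number of disjoint occurrences of $n_0$ flanked by $r$ zeros, and a standard indicator and second-moment computation shows that these occurrences are only weakly correlated, forcing $\V L_k\ge c\,k$ for some $c>0$ and all large $k$. Granting this, $\sigma^2>0$ and the Gaussian conclusion follows from the quasi-powers theorem. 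I expect this last non-degeneracy step to be the main obstacle, as it is the only point where the analytic machinery must be supplemented by a direct combinatorial argument; the explicit computation of $\sigma^2$ is tedious but purely mechanical.
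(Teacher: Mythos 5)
Your first two steps match the paper exactly: the paper also identifies $\E[e^{tL_k}]=[x^k]F(x,t)/q^k$, feeds Lemma~\ref{lem:sing_anal} into the Quasi-power theorem, and obtains the mean, the variance and (conditionally on $\sigma^2\neq 0$) the Gaussian limit in one stroke; the constants are indeed obtained by implicit differentiation of $x+x^rB(x,t)=1$ at $(1/q,0)$, and your intermediate values $G_x(1/q,0)=q^{r+1}$ and $\alpha'(0)=-B_t(1/q,0)q^{-2r-1}$ are correct. The paper itself says that the only part requiring real work is the non-degeneracy $\sigma^2\neq 0$, and that is precisely where your proposal has a genuine gap. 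Your sketch argues that occurrences of a single block $n_0\in\B$ with $f(n_0)\neq 1$ are ``only weakly correlated'' and that this forces $\V L_k\geq ck$. But $L_k$ is not the count of occurrences of $n_0$; by Proposition~\ref{prop:split} it is a \emph{weighted sum} $\sum_i \log f(m_i)$ over all blocks of the decomposition, and the counts of different blocks are mutually (typically negatively) correlated. A variance lower bound for the count of $n_0$ alone does not rule out cancellation in the weighted sum, so the claim $\V L_k\geq ck$ does not follow from what you wrote. To repair the combinatorial route one needs something sharper, e.g.: partition the digit string into windows of length $w=\ell(n_0)+2r$, observe that replacing a window's content $0^r n_0 0^r$ by $0^{w}$ changes $L_k$ by the \emph{pointwise constant} $-\log f(n_0)\neq 0$ regardless of the remaining digits (because the surrounding decomposition is unaffected), and then apply the conditional-variance/Hoeffding decomposition over independent alternating windows. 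That is a workable argument, but it is not the ``standard indicator and second-moment computation'' you describe, and it is not what the paper does.

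The paper's own non-degeneracy argument is purely analytic and worth contrasting with your plan. Assuming $\sigma^2=0$, it distinguishes two cases. If $\log\alpha(t)$ is not linear, let $s\geq 3$ be the order of its first nonlinear Taylor term; then the normalised variable $R_k=(L_k-\mu k)/k^{1/s}$ would satisfy $\E(e^{\tau R_k})\to\exp(-b\tau^s)$, and by L\'evy's continuity theorem $R_k$ would converge to a law with moment generating function $\exp(-b\tau^s)$ --- impossible, since such a law has second moment $0$, hence is degenerate at $0$, hence would have moment generating function $1$. If instead $\log\alpha(t)$ is linear, substituting $\alpha(t)=e^{at}/q$ into $1=x+x^rB(x,t)$ gives $1=e^{at}/q+q^{-r}e^{art}\sum_{n\in\B}q^{-\ell(n)}e^{a\ell(n)t}f(n)^t$, whose right-hand side is strictly convex in real $t$ unless $a=0$ and $f(n)=1$ for all $n\in\B$, i.e.\ unless $f\equiv 1$. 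You should either adopt this argument or carry out the window-resampling bound in full; as it stands, the non-degeneracy step --- the crux of the theorem --- is asserted rather than proved.
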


\begin{proof}
The moment
generating function of \begin{math}L_{k}\end{math} is \begin{math}[x^{k}]F(x,t)/q^{k}\end{math}.
Hence the statement follows from Lemma~\ref{lem:sing_anal} by means of the Quasi-power theorem, see
\cite{Hwang:1998} or \cite[Chapter IX.5]{Flajolet-Sedgewick:ta:analy}. The only part that we actually have to verify is
that \begin{math}\sigma^2 \neq 0\end{math} unless \begin{math}f\end{math} is constant. 

Assume that \begin{math}\sigma^{2}=0\end{math}. We first consider the case that
\begin{math}\log\alpha(t)\end{math} is not a linear function. Let \begin{math}s\end{math} be the least integer
greater than \begin{math}1\end{math} such that \begin{math}t^s\end{math} occurs with a nonzero coefficient in the Taylor expansion of \begin{math}\log \alpha(t)\end{math} at \begin{math}t=0\end{math}, i.e.,
\begin{equation*}\log \alpha(t) = \log \alpha(0) + a t + b t^s + O(t^{s+1}).\end{equation*}
Note that $a = -\mu$. Moreover, by the assumption that $\sigma^2 = 0$, we must have \begin{math}s\geq 3\end{math}. Since \begin{math}\alpha(0) = \frac{1}{q}\end{math} and \begin{math}\kappa(0)=1\end{math}, it follows that
\begin{align*}
\E(\exp(tL_k)) &= \frac{[x^k] F(x,t)}{q^k} = \exp \Big( \log \kappa(t) - k\log \alpha(t) - k \log q + O\big((1+\epsilon)^{-k} \big) \Big) \\
&= \exp\Big( -akt -bkt^s +O \big(kt^{s+1}+t+(1+\epsilon)^{-k} \big) \Big).
\end{align*}
Considering the normalised version \begin{math}R_k = \frac{L_k - \mu k}{k^{1/s}}\end{math} of the random variable \begin{math}L_k\end{math}, we get
\begin{equation*}\E \Big( \exp\big( \tau R_k \big) \Big) = \exp \Big( -b\tau^s + O \big(k^{-1/s} + (1+\epsilon)^{-k} \big)\Big)\end{equation*}
for fixed \begin{math}\tau\end{math}.
So for every complex $\tau$, we have  \begin{math}\lim_{k \to \infty} \E ( \exp\big( \tau R_k \big) ) =
\exp(-b\tau^s)\end{math}, which is a continuous
function. By L\'evy's continuity theorem, this would imply convergence
in distribution of \begin{math}R_k\end{math} to a random variable with moment generating
function \begin{math}M(\tau) = \exp(-b\tau^s)\end{math}. However, there is no such random
variable: all derivatives at \begin{math}\tau=0\end{math} are finite and the second
derivative of \begin{math}\exp(-b\tau^s)\end{math} at \begin{math}\tau=0\end{math} is 0, thus the second
moment is \begin{math}0\end{math}. A random variable whose second moment is \begin{math}0\end{math} is almost surely
equal to \begin{math}0\end{math} and would thus have moment generating function \begin{math}1\end{math}.

The only remaining possibility is that \begin{math}\log \alpha(t)\end{math} is linear:
\begin{math}\log \alpha(t) = \log \alpha(0) + a t\end{math}, thus \begin{math}\alpha(t) = \alpha(0)
e^{at} = e^{at}/q\end{math}. If we plug this into the defining equation of \begin{math}\alpha(t)\end{math}, we obtain
\begin{equation*}1 = \frac{e^{at}}{q} + \frac{e^{art}}{q^r} \sum_{n \in \B} q^{-\ell(n)} e^{a \ell(n)t} f(n)^t\end{equation*}
identically for \begin{math}|t| \leq \delta\end{math}. However, the right side of this identity has strictly positive second derivative for real \begin{math}t\end{math} unless \begin{math}a = 0\end{math} and \begin{math}f(n) = 1\end{math} for all \begin{math}n \in \B\end{math} (in which case \begin{math}f(n) = 1\end{math} for all \begin{math}n\end{math}). Thus \begin{math}\sigma^2 \neq 0\end{math} unless \begin{math}f \equiv 1\end{math}.
\end{proof}

\begin{cor}\label{cor:clt-add}
  Assume that the \begin{math}q\end{math}-quasiadditive function \begin{math}f\end{math} has at most logarithmic growth.

Let \begin{math}N_k\end{math} be a randomly chosen integer in \begin{math}\{0,1,\ldots,q^k-1\}\end{math}. The
random variable \begin{math}L_k = f(N_k)\end{math} has mean \begin{math}\hat\mu k + O(1)\end{math} and
variance \begin{math}\hat\sigma^2 k + O(1)\end{math}, where the two constants \begin{math}\hat\mu\end{math} and \begin{math}\hat\sigma^2\end{math}are given by
the same formulas as in Theorem~\ref{thm:clt-mult}, with \begin{math}B(x,t)\end{math} replaced by
\begin{equation*}
  \hat B(x,t) = \sum_{n \in \B} x^{\ell(n)} e^{f(n)t}.
\end{equation*}

If \begin{math}f\end{math} is not the constant function \begin{math}f \equiv 0\end{math}, then the normalised random variable \begin{math}(L_k - \hat\mu k)/(\hat\sigma \sqrt{k})\end{math} converges weakly to a standard Gaussian distribution.
\end{cor}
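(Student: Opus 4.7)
The natural plan is to reduce the additive case to the multiplicative case already handled in Theorem~\ref{thm:clt-mult} via the exponential transformation of Proposition~\ref{prop:trivial}. Set $g(n) = e^{f(n)}$. Then $g$ is strictly positive, and Proposition~\ref{prop:trivial} (applied with $c = e$) tells us that $g$ is $q$-quasimultiplicative whenever $f$ is $q$-quasiadditive, with the same parameter $r$. Moreover, $L_k = f(N_k) = \log g(N_k)$, so the random variable studied in this corollary is precisely the random variable $\log g(N_k)$ to which Theorem~\ref{thm:clt-mult} applies.

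The first thing to verify is that $g$ satisfies the hypothesis of at most polynomial growth required in Theorem~\ref{thm:clt-mult}. This is where the assumption $f(n) = O(\log n)$ (interpreted as $|f(n)| = O(\log n)$, in line with the discussion following the definition) enters: it yields constants $C, c > 0$ such that $-c\log n \le f(n) \le c\log n$ for $n$ large, hence $n^{-c} \ll g(n) \ll n^{c}$, giving both $g(n) = O(n^c)$ and $g(n) = \Omega(n^{-c})$, as required.

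With this in hand, I would apply Theorem~\ref{thm:clt-mult} verbatim to $g$. The only object in the conclusion whose form changes is the auxiliary generating function $B(x,t)$: for the function $g$ it reads
\begin{equation*}
B(x,t) = \sum_{n \in \B} x^{\ell(n)} g(n)^t = \sum_{n \in \B} x^{\ell(n)} e^{f(n)t} = \hat B(x,t),
\end{equation*}
so the mean and variance formulas of Theorem~\ref{thm:clt-mult} translate directly into those of the corollary, with $B$ replaced by $\hat B$. Since the hypothesis $L_k = \log g(N_k)$ is satisfied, the weak convergence statement transfers as well, once we verify that the non-degeneracy assumption is equivalent in the two settings: $g \equiv 1$ on the nonnegative integers if and only if $f \equiv 0$, so the variance $\hat\sigma^2$ is nonzero exactly when $f$ is not identically zero.

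The only genuinely delicate point — and the step I expect to be the main (though still mild) obstacle — is the growth transfer in the second paragraph: the reader must be convinced that ``at most logarithmic growth'' of the possibly signed function $f$ really does encode both an upper and a lower logarithmic bound, so that $g$ falls within the polynomial-growth framework of Theorem~\ref{thm:clt-mult}. Everything else is a direct substitution, so the corollary follows with no further analytic work.
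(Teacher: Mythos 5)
Your proposal is correct and matches the paper's intended derivation exactly: the paper obtains the corollary from Theorem~\ref{thm:clt-mult} by the same exponential transfer $g(n)=e^{f(n)}$ via Proposition~\ref{prop:trivial}, with the logarithmic bound on $f$ supplying precisely the two-sided polynomial growth condition (this is why the paper's definition of polynomial growth includes the $\Omega(n^{-c})$ clause). Your identification of $B(x,t)$ for $g$ with $\hat B(x,t)$ and the equivalence $g\equiv 1 \Leftrightarrow f\equiv 0$ complete the argument as the paper leaves it implicit.
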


\begin{remark}
By means of the Cram\'er-Wold device (and Corollary~\ref{cor:lin_comb}), we also obtain joint normal distribution of tuples of \begin{math}q\end{math}-quasiadditive functions.
\end{remark}

We now revisit the examples discussed in
Section~\ref{sec:exampl-q-quasiadd} and state the corresponding
central limit theorems. Some of them are well known while others are
new. We also provide numerical values for the constants in mean and variance.
\begin{example}[see also \cite{Kirschenhofer:1983:subbl,Drmota:2000}]The number of blocks \begin{math}0101\end{math} occurring in the binary
  expansion of \begin{math}n\end{math} is a \begin{math}2\end{math}-quasiadditive function of at most
  logarithmic growth. Thus by Corollary~\ref{cor:clt-add}, the
  standardised random variable is asymptotically normally distributed, the constants being \begin{math}\hat\mu = \frac1{16}\end{math} and \begin{math}\hat\sigma^2 = \frac{17}{256}\end{math}.
\end{example}

\begin{example}[see also \cite{Thuswaldner:1999,Heuberger-Kropf:2013:analy}]
  The Hamming weight of the nonadjacent form is \begin{math}2\end{math}-quasiadditive
  with at most logarithmic growth (as the length of the NAF of \begin{math}n\end{math} is logarithmic). Thus by Corollary~\ref{cor:clt-add}, the
  standardised random variable is asymptotically normally distributed. The associated constants are \begin{math}\hat\mu = \frac13\end{math} and \begin{math}\hat\sigma^2 = \frac2{27}\end{math}.
\end{example}

\begin{example}[see Section~\ref{sec:exampl-q-quasiadd}]
  The number of optimal \begin{math}\{0,1,-1\}\end{math}-representations is
  \begin{math}2\end{math}-quasi\-mul\-ti\-plica\-tive. As it is always greater or equal to \begin{math}1\end{math} and
  \begin{math}2\end{math}-regular, it has at most polynomial growth. Thus
  Theorem~\ref{thm:clt-mult} implies that the standardised logarithm
  of this random variable is asymptotically normally distributed with
  numerical constants given by \begin{math}\mu\approx 0.060829\end{math}, \begin{math}\sigma^{2}\approx 0.038212\end{math}.
\end{example}

\begin{example}[see Section~\ref{sec:exampl-q-quasiadd}] Suppose that the sequence \begin{math}s_1,s_2,\ldots\end{math} satisfies \begin{math}s_{n}\geq 1\end{math} and \begin{math}s_{n}=O(c^{n})\end{math} for a constant
  \begin{math}c\geq 1\end{math}.
  The run length transform \begin{math}t(n)\end{math} of \begin{math}s_{n}\end{math}
  is \begin{math}2\end{math}-quasimultiplicative. As \begin{math}s_{n}\geq 1\end{math} for all \begin{math}n\end{math}, we have \begin{math}t(n)\geq
  1\end{math} for all \begin{math}n\end{math} as well. Furthermore, there exists a constant \begin{math}A\end{math} such that \begin{math}s_n \leq A c^n\end{math} for all \begin{math}n\end{math}, and the sum of all run lengths is bounded by the length of the   binary expansion, thus
  \begin{equation*}
t(n)=\prod_{i\in\mathcal L(n)}s_{i} \leq \prod_{i \in \mathcal{L}(n)} (A c^i) \leq (Ac)^{1+\log_2 n}.
\end{equation*}
Consequently, \begin{math}t(n)\end{math} is positive and has at most polynomial growth. By
Theorem~\ref{thm:clt-mult}, we obtain an asymptotic normal
distribution for the standardised random variable \begin{math}\log t(N_{k})\end{math}. The constants \begin{math}\mu\end{math} and \begin{math}\sigma^2\end{math} in mean and variance are given by
\begin{equation*}
\mu = \sum_{i \geq 1} (\log s_i) 2^{-i-2} 
\end{equation*}
and
\begin{equation*}
\sigma^2 = \sum_{i \geq 1} (\log s_i)^2 \big(2^{-i-2} - (2i-1)2^{-2i-4} \big) - \sum_{j > i \geq 1} (\log s_i)(\log s_j)   (i+j-1) 2^{-i-j-3}.
\end{equation*}
These formulas can be derived from those given in Theorem~\ref{thm:clt-mult} by means of the representation~\eqref{eq:q2_r1}, and the terms can also be interpreted easily: write \begin{math}\log t(n) = \sum_{i \geq 1} X_i(n) \log s_i\end{math}, where \begin{math}X_i(n)\end{math} is the number of runs of length \begin{math}i\end{math} in the binary representation of \begin{math}n\end{math}. The coefficients in the two formulas stem from mean, variance and covariances of the \begin{math}X_i(n)\end{math}.

In the special case that
\begin{math}s_{n}\end{math} is the Jacobsthal sequence ($s_n = \frac13(2^{n+2} - (-1)^n$), see Section~\ref{sec:exampl-q-quasiadd}), we have the
  numerical values
  \begin{math}\mu \approx 0.429947\end{math}, \begin{math}\sigma^{2} \approx 0.121137\end{math}.
\end{example}

Let us finally show that the central limit theorem holds in a slightly more general version, where we pick an integer uniformly at random from the set $\{0,1,2,\ldots,K-1\}$ ($K$ not necessarily being a power of $q$ any longer). We first state and prove our result for $q$-quasiadditive functions; it automatically transfers to $q$-quasimultiplicative functions by Proposition~\ref{prop:trivial}.

\begin{theorem}
 Assume that the \begin{math}q\end{math}-quasiadditive function \begin{math}f\end{math} has at most logarithmic growth, and that $f$ is not the constant function $f \equiv 0$. Let \begin{math}M_K\end{math} be a randomly chosen integer in \begin{math}\{0,1,\ldots,K-1\}\end{math}. The random variable 
$$\frac{f(M_K) - \hat\mu \log_q K}{\hat\sigma \sqrt{\log_q K}},$$
where the two constants $\hat\mu$ and $\hat\sigma^2$ are the same as in Corollary~\ref{cor:clt-add}, converges weakly to a standard Gaussian distribution.
\end{theorem}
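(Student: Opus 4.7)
The plan is to reduce the case of a general $K$ to the case $K = q^L$ already handled by Corollary~\ref{cor:clt-add}, by partitioning $\{0,1,\ldots,K-1\}$ according to the position where the $q$-ary expansion of $M_K$ first diverges from that of $K$. Set $L = \lceil \log_q K \rceil$, so that $q^{L-1} \leq K \leq q^L$, and expand $K = \sum_{i=0}^{L-1} d_i q^i$ in base $q$. For each pair $(j,e)$ with $0 \leq j \leq L-1$ and $0 \leq e < d_j$, let
\begin{equation*}
A_{j,e} = \sum_{i > j} d_i q^i + e q^j, \qquad S_{j,e} = \{A_{j,e} + m : 0 \leq m < q^j\};
\end{equation*}
the sets $S_{j,e}$ form a partition of $\{0,\ldots,K-1\}$ into blocks of size $q^j$. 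Let $T$ denote the random index $j$ with $M_K \in S_{j,e}$ for some $e$, so that $\Prob(T = j) = d_j q^j / K$, and conditional on $T = j$ (and the appropriate $e$), the difference $N_T := M_K - A_{T,e}$ is uniform on $\{0,\ldots,q^T - 1\}$. The elementary estimate $\Prob(T < L - s) \leq q^{L-s}/K \leq q^{1-s}$ shows that $T$ lies within $O(1)$ of $L-1$ with overwhelming probability; in particular $\Prob(T \leq L - s_K) \to 0$ for any $s_K \to \infty$.

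The next step is to compare $f(M_K) = f(A_{T,e} + N_T)$ with $f(N_T)$, to which Corollary~\ref{cor:clt-add} applies directly. The $q$-ary expansion of $A_{j,e} + m$ consists of the $(L-j)$-digit expansion of $P_{j,e} := A_{j,e}/q^j$ followed by the $j$-digit (leading-zero padded) expansion of $m$. By Proposition~\ref{prop:split}, $f(A_{j,e}+m)$ is the sum of $f$ over the blocks delimited by runs of $r$ or more zeros in this expansion. If the trailing zeros of $P_{j,e}$ together with the leading zeros of $m$'s $j$-digit representation add up to at least $r$, the prefix and suffix split apart and $f(A_{j,e}+m) = f(P_{j,e}) + f(m)$; otherwise the last block of $P_{j,e}$ merges with the topmost block $m_{\mathrm{top}}$ of $m$, contributing a single term $f(P_{j,e} \ast m_{\mathrm{top}})$ in place of $f(P_{j,e}) + f(m_{\mathrm{top}})$, where $\ast$ denotes the obvious digit concatenation. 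In either case, the at most logarithmic growth of $f$ yields the uniform bound
\begin{equation*}
|f(A_{j,e} + m) - f(m)| = O(L - j + \ell_{\mathrm{top}}(m)),
\end{equation*}
where $\ell_{\mathrm{top}}(m)$ denotes the length of the topmost block of $m$.

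To finish, observe that $\ell_{\mathrm{top}}(N_j)$ has exponentially light tails, since $\Prob(\ell_{\mathrm{top}}(N_j) \geq k) \leq (1-q^{-r})^{\lfloor k/r \rfloor}$ (the event requires no run of $r$ zeros among the topmost $k$ digits); hence $\ell_{\mathrm{top}}(N_T) = O(\log L)$ with probability tending to $1$. Choosing $s_K \to \infty$ slowly enough that $s_K = o(\sqrt L)$, on an event of probability tending to $1$ we have simultaneously $L - T = O(s_K)$ and $|f(M_K) - f(N_T)| = o(\sqrt L)$. Since $T \to \infty$ in probability, Corollary~\ref{cor:clt-add} gives $(f(N_T) - \hat\mu T)/(\hat\sigma \sqrt T) \to N(0,1)$; combined with $|T - \log_q K| = O(s_K) = o(\sqrt L)$ and Slutsky's theorem, the desired conclusion follows. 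The main obstacle is the second step: a priori $f(M_K)$ and $f(N_T)$ are both of order $\log K$, so a crude bound on their difference would swamp the target standard deviation $\sqrt{\log_q K}$. The quasiadditive splitting of Proposition~\ref{prop:split} is precisely what saves us, localising the discrepancy to the topmost block of $N_T$, whose length is of order $\log\log K$ with overwhelming probability.
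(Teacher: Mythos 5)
Your proof is correct, but it takes a genuinely different route from the paper's. The paper conditions on the position $k$ of a run of $r$ zeros in a window near the top of the expansion (with $q^k$ between $K/\log^2 K$ and $K/\log K$), writing $m = q^{k+r}a+b$ so that quasiadditivity gives $f(m)=f(a)+f(b)$ exactly, with $a = O(\log^2 K)$ and hence $f(a)=O(\log\log K)$ negligible against $\sqrt{\log K}$; the integers lacking such a zero-run, and the boundary value $a=\lfloor K/q^{k+r}\rfloor$ for which $b$ does not range over a full interval $\{0,\ldots,q^k-1\}$, are discarded as asymptotically negligible sets. You instead partition $\{0,\ldots,K-1\}$ exactly, according to the first digit position where $M_K$ drops below $K$, which makes the low-order part $N_T$ genuinely uniform on $\{0,\ldots,q^T-1\}$ with $T$ concentrated near $\log_q K$; you then pay for the high-order prefix via the block decomposition of Proposition~\ref{prop:split}, localising the discrepancy $f(M_K)-f(N_T)$ to the prefix plus the possibly merged topmost block of $N_T$, whose length has exponentially light tails. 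In effect you trade the paper's two exceptional sets for a tail estimate on the topmost block length; your conditioning is cleaner (every $m<K$ is covered and the conditional law is exactly that of $N_j$), while the paper avoids any merging analysis by only ever cutting at actual zero-runs. Two small points worth tightening: (i) applying Corollary~\ref{cor:clt-add} at the random index $T$ requires the convergence $\Prob\bigl((f(N_k)-\hat\mu k)/(\hat\sigma\sqrt{k})\leq x\bigr)\to\Phi(x)$ to be uniform in $x$ (which follows from continuity of $\Phi$) before averaging over the conditioning on $T=j$ --- the paper makes the same uniformity remark explicitly; (ii) the normalisation should read $q^{L-1}<K\leq q^{L}$, and the boundary case $K=q^{L}$ is already covered by Corollary~\ref{cor:clt-add} directly.
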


\begin{proof}
Let $L_1$ and $L_2$ be the largest integers for which we have $q^{L_1}
< K/\log^2 K$ and $q^{L_2} < K/\log K$, respectively. For each
nonnegative integer $m < K$, we consider (if it exists) a representation of the form
\begin{equation}\label{eq:good_rep}
m = q^{k+r} a + b,
\end{equation}
where $b < q^k$ and $L_1 \leq k \leq L_2$. If there are two or more such representations for a specific $m$, we take the one for which $k$ is maximal so as to obtain a unique representation. If $m$ does not have a representation of this form, then it does not have $r$ consecutive zeros in its $q$-ary representation anywhere in the block ranging from the $(L_1+1)$-th to the $(L_2+r)$-th digit, counting from the least significant digit. The proportion of such integers is
$$O \Big( (1-q^{-r})^{(L_2-L_1)/r} \Big) = O \Big( (1-q^{-r})^{(\log\log K)/r} \Big),$$
which becomes negligible as $K \to \infty$. 

If however $m$ can be represented in the form~\eqref{eq:good_rep}, then we have
$$f(m) = f(a) + f(b)$$
by quasiadditivity of $f$. Moreover, $a = O(\log^2 K)$ by the
definition of $L_1$, so $f(a) = O(\log \log K)$ since we assumed $f$
to have at most logarithmic growth. For given $a$ and $k$, $b$ can be
any integer in the set $\{0,1,\ldots,q^k-1\}$, unless $a = \lfloor
K/q^{k+r} \rfloor$. In the former case, we can identify $b$ with $N_k$, the random variable defined in Theorem~\ref{thm:clt-mult} and
Corollary~\ref{cor:clt-add}. The latter case, however, is negligible, since it only accounts for a proportion of at most
$$\frac{1}{K}\sum_{k=L_1}^{L_2} q^k = O(q^{L_2}/K) = O(1/\log K)$$
values of $m$. Now we condition on the event that the random integer $M_K$ has a representation of the form~\eqref{eq:good_rep} for certain fixed $k$ and $a \neq \lfloor
K/q^{k+r} \rfloor$. For every real number $x$, we have
\begin{equation}\label{eq:conv-distr-M}
\begin{aligned}
\Prob \Big( f(M_K) \leq \hat\mu \log_q K + x \hat \sigma \sqrt{\log_q K} \,\Big|\,& q^{k+r} a \leq M_K  < q^{k+r} a + q^k \Big) \\
&= \Prob \Big( f(N_k) \leq \hat \mu \log_q K + x \hat \sigma
\sqrt{\log_q K} - f(a) \Big)\\
&=\Prob\Big(\frac{f(N_{k})-\hat \mu \log_q K - f(a)}{\hat \sigma
\sqrt{\log_q K}}\leq x\Big).
\end{aligned}
\end{equation}
Note that $k = \log_q K + O(\log \log K)$, so
$$\frac{f(N_{k})-\hat \mu \log_q K - f(a)}{\hat \sigma
\sqrt{\log_q K}} = \frac{f(N_k) - \hat\mu k}{\hat\sigma \sqrt{k}} + O \Big( \frac{\log \log K}{\sqrt{\log K}} \Big).$$
Let $\Phi(x) =  \frac{1}{\sqrt{2\pi}} \int_{-\infty}^x e^{-t^2/2}\,dt$ denote the distribution function of a standard Gaussian distribution.
By Corollary~\ref{cor:clt-add}, and because $\Phi$ is continuous, we have 
$$\Prob\Big(\frac{f(N_{k})-\hat \mu \log_q K - f(a)}{\hat \sigma \sqrt{\log_q K}}\leq x\Big) = \Phi(x) + o(1),$$ 
and this holds uniformly in $x$, $a$ and $k$ as $K \to \infty$ (in fact, one can make the speed of convergence explicit by means of the Quasi-power theorem).
Summing~\eqref{eq:conv-distr-M} over all possible values of $a$ and $k$, we obtain
$$\lim_{K \to \infty} \Prob \Big( f(M_K) \leq \hat\mu \log_q K + x \hat \sigma \sqrt{\log_q K} \,\Big) = \Phi(x)$$
for all real numbers $x$, which is what we wanted to prove.
\end{proof}

\begin{cor}
 Assume that the positive, \begin{math}q\end{math}-quasimultiplicative function \begin{math}f\end{math} has at most polynomial growth, and that $f$ is not the constant function $f \equiv 1$. Let \begin{math}M_K\end{math} be a randomly chosen integer in \begin{math}\{0,1,\ldots,K-1\}\end{math}. The random variable 
$$\frac{f(M_K) - \mu \log_q K}{\sigma \sqrt{\log_q K}},$$
where the two constants $\mu$ and $\sigma^2$ are the same as in Theorem~\ref{thm:clt-mult}, converges weakly to a standard Gaussian distribution.
\end{cor}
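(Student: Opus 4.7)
The plan is to reduce the statement to the previously proved central limit theorem for $q$-quasiadditive functions by passing to logarithms. Specifically, I would set $g(n) = \log f(n)$. Since $f$ is positive and $q$-quasimultiplicative, Proposition~\ref{prop:trivial} (applied with $c = e$) immediately yields that $g$ is $q$-quasiadditive. The convergence to a Gaussian for the normalised version of $g(M_K) = \log f(M_K)$ is then exactly what the preceding theorem delivers, provided we can verify that $g$ satisfies its hypotheses.

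The growth and non-degeneracy conditions transfer cleanly under the logarithm. The assumption of at most polynomial growth on $f$ means $f(n) = O(n^c)$ and $f(n) = \Omega(n^{-c})$, which together give $|g(n)| = |\log f(n)| = O(\log n)$, i.e., $g$ has at most logarithmic growth in the sense required for the theorem. Moreover, $g \equiv 0$ precisely when $f \equiv 1$, so by hypothesis $g$ is not identically zero. Thus all the assumptions of the preceding theorem are met.

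Applying that theorem to $g$ yields that $(g(M_K) - \hat\mu \log_q K)/(\hat\sigma \sqrt{\log_q K})$ converges weakly to a standard Gaussian, where $\hat\mu$ and $\hat\sigma^2$ are the constants built from the generating function $\hat B(x,t) = \sum_{n \in \B} x^{\ell(n)} e^{g(n) t}$ of Corollary~\ref{cor:clt-add}. Substituting $g(n) = \log f(n)$ gives $e^{g(n) t} = f(n)^t$, so that $\hat B(x,t)$ coincides with the generating function $B(x,t)$ used in Theorem~\ref{thm:clt-mult}. Consequently the formulas for $\hat\mu$ and $\hat\sigma^2$ agree with those for $\mu$ and $\sigma^2$ of Theorem~\ref{thm:clt-mult}, matching the constants in the corollary's statement.

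There is no genuinely hard step here: the entire argument is the bookkeeping translation afforded by Proposition~\ref{prop:trivial}. The only point requiring mild care is the growth-condition bookkeeping, namely checking that the symmetric ``at most polynomial growth'' assumption on $f$ (with both an upper bound $O(n^c)$ and a lower bound $\Omega(n^{-c})$) is exactly what is needed to ensure that $|\log f(n)| = O(\log n)$, so that $g$ qualifies as a $q$-quasiadditive function of at most logarithmic growth to which the preceding theorem may be applied.
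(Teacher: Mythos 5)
Your proposal matches the paper's own route exactly: the authors state the result for $q$-quasiadditive functions first and note that it ``automatically transfers to $q$-quasimultiplicative functions by Proposition~\ref{prop:trivial},'' which is precisely your reduction $g = \log f$, including the observation that the two-sided polynomial growth bound gives $|\log f(n)| = O(\log n)$ and that $\hat B(x,t)$ for $g$ coincides with $B(x,t)$ for $f$, so the constants agree. (You correctly read the random variable as $\log f(M_K)$, consistent with Theorem~\ref{thm:clt-mult}.)
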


\bibliographystyle{amsplain}
\bibliography{lit}

\end{document}